\theoremstyle{plain}
\theoremstyle{plain}
\newtheorem{theorem}{Theorem}[section]
\newtheorem{definition}[theorem]{Definition}
\newtheorem{lemma}[theorem]{Lemma}
\newtheorem{remark}[theorem]{Remark}
\newcommand{\LeftEqNo}{\let\veqno\@@leqno}
 \numberwithin{equation}  {section}
\begin{document}

\

\vspace{-2cm}

\title[A note on Voiculescu's theorem]{A note on the Voiculescu's theorem for commutative C$^*$-algebras in semifinite von Neumann algebras.}

\author{Don Hadwin}
\curraddr{Department of Mathematics \& Statistics, University of
New Hampshire, Durham, 03824, US}
\email{don@math.unh.edu}
\thanks{}
\author{Rui Shi}
\curraddr{School of Mathematical Sciences, Dalian University of
Technology, Dalian, 116024, P. R. China}
\email{ruishi@dlut.edu.cn, ruishi.math@gmail.com}
\thanks{Rui Shi was partly supported by NSFC(Grant No.11401071) and the Fundamental Research Funds for the Central Universities (Grant No.DUT16RC(4)57).}

\subjclass[2010]{Primary 47C15}


\keywords{approximate equivalence, Weyl-von Neumann theorem, Voiculescu Theorem, semifinite von Neumann algebras}

\begin{abstract}
In the current paper, we generalize the ``compact operator'' part of D. Voiculescu's non-commmutative Weyl-von Neumann theorem on approximate equivalence of unital $*$-homomorphisms of an commutative C$^*$ algebra $\mathcal{A}$ into a semifinite von Neumann algebra. A result of D. Hadwin for approximate summands of representations into a finite von Neumann factor $\mathcal{R}$ is also extended.  
\end{abstract}

\maketitle

\section{Introduction}
In 1976, as a non-commutative version of the Weyl-von Neumann theorem \cite{Berg,Von2,Weyl}, Voiculescu \cite{Voi2} characterized approximate equivalence of two unital representations $\phi,\psi:\mathcal{A}\rightarrow \mathcal{B}(\mathcal{H})$, where $\mathcal{A}$ is a separable unital C$^*$-algebra and $\mathcal {H}$ is a complex separable Hilbert space. A different beautiful proof was given by Arveson \cite{Ave} in 1977. Two representations $\phi$ and $\psi$ of a C$^*$-algebra $\mathcal{A}$ on a Hilbert space $\mathcal{H}$ are said to be \emph{approximately $($unitarily$)$ equivalent}, denoted by $\phi \sim_{a}\psi$, if there exists a net $\left \{U_{\lambda}\right \}_{\lambda\in\Lambda}$ of unitary operators in $\mathcal {B}(\mathcal {H})$, the set of all the bounded linear operators, such that, for every $A\in \mathcal{A}$,%
\begin{equation}
	\lim_{\lambda}\left \Vert U_{\lambda}^{\ast}\phi \left(A\right)  U_{\lambda}-\psi \left(A\right)  \right \Vert =0. \label{equ1-1}
\end{equation}
When $\mathcal{A}$ is separable, $\{  U_{\lambda}\}_{\lambda\in\Lambda}$ can be chosen to be a sequence. Let $\mathcal{K}(\mathcal{H})$ denote the set of the compact operators on $\mathcal {H}$. We say that two representations $\phi$ and $\psi$ of a separable C$^*$-algebra $\mathcal{A}$ into $\mathcal {B}(\mathcal {H})$ are approximately unitarily equivalent (relative to $\mathcal{K}(\mathcal{H})$), denoted by $\phi\sim_{\mathcal{A}}\psi,\mod \mathcal{K}(\mathcal{H})$, if there exists the sequence $\left\{U_{n}\right \}^{\infty}_{n=1}$ of unitary operators in $\mathcal {B}(\mathcal {H})$ satisfying (\ref{equ1-1}) and%
\[
U_{n}^{\ast}\phi \left(A\right)U_{n}-\psi\left(A\right)\in
\mathcal{K}(\mathcal{H})
\]
for every $n$ and every $A\in \mathcal{A}$. If $\mathcal{A}$ is a non-unital C$^*$-algebra and $\sigma:\mathcal{A}\rightarrow \mathcal{B}\left(\mathcal {H}\right)$ is a $\ast$-homomorphism, then let $\mathcal {H}_1=\cap \left \{  \ker \sigma \left(  A\right):A\in\mathcal{A}\right\}$. We have%
\[
\sigma=\textbf{0}\oplus \sigma_{1}
\]
relative to the direct sum $\mathcal {H}=\mathcal {H}_1\oplus \mathcal {H}^{\perp}_1$. Thus $\sigma_{1}$ is said to be the
\emph{nonzero part} of $\sigma$.

The following is the theorem that Voiculescu proved in \cite{Voi2}.

\begin{theorem}
Suppose $\mathcal{A}$ is a separable unital C*-algebra, $\mathcal{H}$ is a separable
Hilbert space and $\phi,\psi:\mathcal{A}\rightarrow \mathcal{B}\left(\mathcal{H}\right)  $ are
unital $\ast$-homomorphisms. The following are equivalent:

\begin{enumerate}
\item $\phi \sim_{a}\psi.$

\item $\phi \sim_{\mathcal{A}}\psi \mod \mathcal{K}(\mathcal{H})$.

\item $\ker \phi=\ker \psi$, $\phi^{-1}\left(  \mathcal{K}(\mathcal{H})
\right)  =\psi^{-1}\left(  \mathcal{K}(\mathcal{H})\right)$, and the
nonzero parts of the restrictions $\phi|_{\phi^{-1}\left(\mathcal{K}(\mathcal{H})\right)}$ and $\psi|_{\psi^{-1}\left(\mathcal{K}(\mathcal{H})\right)}$ are unitarily equivalent.
\end{enumerate}
\end{theorem}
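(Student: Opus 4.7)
My plan is to prove the cycle $(2) \Rightarrow (1) \Rightarrow (3) \Rightarrow (2)$. The first implication is immediate from the definitions: a sequence of unitaries implementing approximate equivalence modulo $\mathcal{K}(\mathcal{H})$ is, a fortiori, a sequence implementing approximate equivalence. For $(1) \Rightarrow (3)$, I would first observe that unitary conjugation preserves operator norms, so $\|\phi(a)\| = \|\psi(a)\|$ for every $a \in \mathcal{A}$, forcing $\ker \phi = \ker \psi$. If $\phi(a)$ is compact, then each $U_\lambda^* \phi(a) U_\lambda$ is compact, so $\psi(a)$ is a norm-limit of compacts and itself compact; by symmetry the pullback ideals $\mathcal{J} := \phi^{-1}(\mathcal{K}(\mathcal{H})) = \psi^{-1}(\mathcal{K}(\mathcal{H}))$ coincide. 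Finally, approximate equivalence of the restrictions $\phi|_\mathcal{J}$ and $\psi|_\mathcal{J}$, both representations into $\mathcal{K}(\mathcal{H})$, yields actual unitary equivalence of their nonzero parts, since any representation into the compacts decomposes as a direct sum of irreducibles with well-defined multiplicities, and those multiplicities are invariants of approximate equivalence.

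Next, for $(3) \Rightarrow (2)$ I would decompose $\mathcal{H} = \mathcal{H}_0^\phi \oplus \mathcal{H}_1^\phi$ with $\mathcal{H}_0^\phi = \bigcap_{a \in \mathcal{J}} \ker \phi(a)$ and write $\phi = \phi_n \oplus \phi_c$, where $\phi_c$ is the extension to $\mathcal{A}$ of the nonzero part of $\phi|_\mathcal{J}$ and $\phi_n$ vanishes on $\mathcal{J}$. Decomposing $\psi$ identically and invoking (3), a unitary $V : \mathcal{H}_1^\phi \to \mathcal{H}_1^\psi$ intertwines $\phi_c$ with $\psi_c$. A quick dimension count shows $\mathcal{H}_0^\phi$ and $\mathcal{H}_0^\psi$ are either both zero or both separable infinite-dimensional (since a nonzero representation whose image meets the compacts only at $0$ cannot live on a finite-dimensional space), providing a unitary $W : \mathcal{H}_0^\phi \to \mathcal{H}_0^\psi$. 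Conjugating $\psi$ by $V \oplus W$ reduces the task to showing $\phi_n \sim_\mathcal{A} \psi_n \mod \mathcal{K}(\mathcal{H}_0)$, where $\phi_n$ and $\psi_n$ share a common kernel and have no nonzero compact operators in their images.

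The heart of the argument is then Voiculescu's absorption theorem: for any representation $\tau : \mathcal{A} \to \mathcal{B}(\mathcal{H}')$ on an auxiliary separable Hilbert space $\mathcal{H}'$ with $\tau(\mathcal{A}) \cap \mathcal{K}(\mathcal{H}') = \{0\}$ and $\ker \tau \supseteq \ker \phi_n$, one has $\phi_n \oplus \tau \sim_\mathcal{A} \phi_n \mod \mathcal{K}$. Applying this with $\tau = \psi_n$, and symmetrically with the roles of $\phi_n, \psi_n$ interchanged, yields $\phi_n \sim_\mathcal{A} \phi_n \oplus \psi_n \sim_\mathcal{A} \psi_n \mod \mathcal{K}$, whence the conclusion by transitivity via a standard diagonal-sequence selection of unitaries. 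The main obstacle is proving the absorption statement itself, which I would attack through Arveson's strategy: construct a quasicentral approximate unit for $\mathcal{K}$ relative to the C$^*$-algebra generated by $\phi_n(\mathcal{A}) \cup \tau(\mathcal{A})$, extract finite-rank projections $P_n \uparrow I$ that asymptotically commute with the representation, and use these projections inductively to build finite-rank partial isometries that glue into the required unitaries. The no-compacts-in-image hypothesis on $\tau$ is exactly what permits the residual pieces to be absorbed without obstruction, and this quasicentral-unit construction is the delicate technical step I expect to be most demanding.
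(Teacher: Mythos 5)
The paper does not prove this theorem: it is quoted verbatim as Voiculescu's result from \cite{Voi2} (with Arveson's alternative proof in \cite{Ave}), so your attempt can only be judged on its own terms. The implications $(2)\Rightarrow(1)$ and $(1)\Rightarrow(3)$ are fine as sketched. The gap is in $(3)\Rightarrow(2)$, and it is a real one. Two of your intermediate claims are false. First, the summand $\phi_n$ acting on $\mathcal{H}_0^{\phi}=\bigcap_{a\in\mathcal{J}}\ker\phi(a)$ need not satisfy $\phi_n(\mathcal{A})\cap\mathcal{K}(\mathcal{H}_0^{\phi})=\{0\}$: the condition $\phi(a)\in\mathcal{K}(\mathcal{H})$ constrains the two summands of $\phi(a)=\phi_n(a)\oplus\phi_c(a)$ simultaneously, so an element $a\notin\mathcal{J}$ can have $\phi_n(a)$ compact and nonzero while $\phi_c(a)$ is non-compact. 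Second, and consequently, the ``dimension count'' fails: $\mathcal{H}_0^{\phi}$ and $\mathcal{H}_0^{\psi}$ need not have the same dimension. Take $\mathcal{A}=\mathcal{K}(\ell^2)+\mathbb{C}I$, let $\iota$ be the identity representation and $\chi$ the character annihilating $\mathcal{K}(\ell^2)$, and set $\phi=\chi\oplus\iota$ and $\psi=\chi\oplus\chi\oplus\iota$. Both satisfy condition $(3)$ with $\mathcal{J}=\mathcal{K}(\ell^2)$ and with common nonzero part $\iota|_{\mathcal{K}(\ell^2)}$, yet $\mathcal{H}_0^{\phi}$ is one-dimensional while $\mathcal{H}_0^{\psi}$ is two-dimensional, so your unitary $W$ does not exist; moreover $\phi_n=\chi$ has image equal to $\mathcal{K}(\mathbb{C})$. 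The reduction to ``$\phi_n\sim_{\mathcal{A}}\psi_n$ on a common space, with both images disjoint from the compacts'' therefore cannot be carried out, and the absorption lemma you formulate (with hypothesis $\tau(\mathcal{A})\cap\mathcal{K}(\mathcal{H}')=\{0\}$) is not applicable to $\tau=\psi_n$.

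The repair is to state the absorption theorem with its correct hypothesis and to apply it at the level of the full representations: if $\pi$ is a unital representation on a separable space and $\sigma$ is a unital representation annihilating the ideal $\pi^{-1}\left(\mathcal{K}(\mathcal{H})\right)$ --- with no assumption that $\sigma(\mathcal{A})$ avoids the compact operators of its own space --- then $\pi\oplus\sigma\sim_{\mathcal{A}}\pi \mod \mathcal{K}$. Since $\psi_n$ annihilates $\mathcal{J}=\phi^{-1}\left(\mathcal{K}(\mathcal{H})\right)$, this gives $\phi\sim\phi\oplus\psi_n=\phi_c\oplus\phi_n\oplus\psi_n$, and symmetrically $\psi\sim\psi_c\oplus\psi_n\oplus\phi_n$; hypothesis $(3)$ supplies a genuine unitary intertwining $\phi_c$ with $\psi_c$ (extended from $\mathcal{J}$ to $\mathcal{A}$ by nondegeneracy, as you implicitly do), so the two right-hand sides are unitarily equivalent and $\phi\sim\psi$ follows without ever comparing $\mathcal{H}_0^{\phi}$ with $\mathcal{H}_0^{\psi}$. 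Your closing paragraph on quasicentral approximate units is the right engine for proving the absorption lemma, but as set up the lemma you would be proving is not the one you need.
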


\bigskip

In \cite{Hadwin1}, the first author gave a different characterization of approximate equivalence. For $T\in \mathcal{B}(\mathcal{H})$, we let $\mathrm{rank}\left(T\right)$ denote the Hilbert-space dimension of the closure of the range \textrm{Ran}$\left(T\right)$ of $T$.

In the same paper, the first author (Lemma $2.3$ of \cite{Hadwin1})  proved an analogue for \emph{approximate summands. }

\begin{theorem}\label{approx-sum}
Suppose $\mathcal{A}$ is a separable unital C*-algebra, $\mathcal{H}$ and $\mathcal{K}$ are
Hilbert spaces, and $\phi:\mathcal{A\rightarrow}\mathcal{B}(\mathcal{H})$,
$\psi:\mathcal{A\rightarrow}\mathcal{B}(\mathcal{K})$ are unital representations.
The following are equivalent:

\begin{enumerate}
\item There is a representation $\gamma:\mathcal{A}\rightarrow \mathcal{B}(\mathcal{K}_1)$ for
some Hilbert space $\mathcal{K}_{1}$ such that%
\[
\psi \oplus \gamma \sim_{a}\phi.
\]

\item For every $A\in \mathcal{A}$,%
\[
\mathrm{rank}\left(\psi\left(A\right)\right)\leq\mathrm{rank}\left(
\phi \left(A\right)\right)  .
\]

\end{enumerate}
\end{theorem}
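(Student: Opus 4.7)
The plan treats the two implications separately.

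For $(1) \Rightarrow (2)$, I would exploit lower semicontinuity of rank under norm convergence. Given unitaries $U_\lambda$ witnessing $\psi \oplus \gamma \sim_a \phi$, one has $U_\lambda^*(\psi \oplus \gamma)(A) U_\lambda \to \phi(A)$ in norm; by continuity of the continuous functional calculus this also holds after passing to absolute values. For any $\epsilon > 0$, this eventually forces $\dim E_{|\phi(A)|}((\epsilon,\infty)) \leq \dim E_{|(\psi \oplus \gamma)(A)|}((\epsilon/2,\infty)) \leq \mathrm{rank}((\psi \oplus \gamma)(A))$, so letting $\epsilon \to 0$ gives $\mathrm{rank}(\phi(A)) \leq \mathrm{rank}((\psi \oplus \gamma)(A))$. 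Since $\psi$ is a direct summand of $\psi \oplus \gamma$, $\mathrm{rank}(\psi(A)) \leq \mathrm{rank}((\psi \oplus \gamma)(A))$; combining these (and using the reverse inequality from the symmetric statement $U_\lambda \phi(A) U_\lambda^* \to (\psi \oplus \gamma)(A)$) yields $\mathrm{rank}(\psi(A)) \leq \mathrm{rank}(\phi(A))$.

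For $(2) \Rightarrow (1)$, the strategy is to construct $\gamma$ explicitly and then apply Voiculescu's theorem. First, (2) immediately yields $\ker\phi \subseteq \ker \psi$. Decomposing $\phi|_{\phi^{-1}(\mathcal{K}(\mathcal{H}))}$ into irreducible summands gives a multiplicity function $\pi \mapsto n_\pi(\phi)$, and similarly for $\psi$. I expect (2) to force $n_\pi(\psi) \leq n_\pi(\phi)$ for each $\pi$, obtained by testing against elements $A \in \mathcal{A}$ whose image $\pi(A)$ is of small (say rank-one) rank, available via Kadison's transitivity theorem. Then I would set
\[
  \gamma = \bigoplus_\pi \pi^{(n_\pi(\phi) - n_\pi(\psi))} \,\oplus\, \sigma,
\]
where $\sigma$ is a representation with $\ker \sigma = \ker \phi$ and $\sigma^{-1}(\mathcal{K}(\mathcal{H})) = \ker \sigma$, chosen large enough so that its essential part, together with the essential part of $\psi$, matches that of $\phi$ up to approximate equivalence; an infinite ampliation of the essential part of $\phi$ is a natural candidate for $\sigma$.

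With $\gamma$ so constructed, the pair $(\psi \oplus \gamma, \phi)$ satisfies the three conditions in Voiculescu's theorem: identical kernels, identical preimages of $\mathcal{K}(\mathcal{H})$, and unitarily equivalent nonzero parts of the restrictions to those preimages. Voiculescu's theorem then gives $\psi \oplus \gamma \sim_a \phi$. The main obstacle is the passage from the scalar rank inequality in (2) to the irreducible-multiplicity inequalities $n_\pi(\psi) \leq n_\pi(\phi)$, as the multiplicity function may be supported on an uncountable set of irreducibles and the test elements must witness each $\pi$ individually; a secondary difficulty is constructing $\sigma$ with exactly the prescribed kernel while ensuring no new compact operators appear in its image.
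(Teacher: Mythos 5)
The paper does not actually prove Theorem \ref{approx-sum}; it is quoted from Lemma 2.3 of \cite{Hadwin1}, so there is no in-paper argument to compare yours against, and I can only assess it on its own terms. Your direction $(1)\Rightarrow(2)$ is correct and standard: if $\|S-T\|<\epsilon$ then $S$ is bounded below on $\operatorname{ran}\chi_{[\epsilon,\infty)}(|T|)$, so $\dim\operatorname{ran}\chi_{[\epsilon,\infty)}(|T|)\le\mathrm{rank}(S)$; applying this in both directions shows approximate equivalence preserves rank, whence $\mathrm{rank}(\psi(A))\le\mathrm{rank}\bigl((\psi\oplus\gamma)(A)\bigr)=\mathrm{rank}(\phi(A))$.

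The direction $(2)\Rightarrow(1)$ has genuine gaps. First, your $\sigma$ is mis-specified: with $\ker\sigma=\sigma^{-1}(\mathcal{K}(\mathcal{H}))=\ker\phi$, any $A$ with $\phi(A)$ a nonzero compact operator has $\sigma(A)$ nonzero and non-compact, so $(\psi\oplus\gamma)^{-1}(\mathcal{K}(\mathcal{H}))\subseteq\ker\phi\subsetneq\phi^{-1}(\mathcal{K}(\mathcal{H}))$ and Voiculescu's condition on equality of compact preimages fails. You need $\ker\sigma=\sigma^{-1}(\mathcal{K}(\mathcal{H}))=\phi^{-1}(\mathcal{K}(\mathcal{H}))$ (note your own candidate, an infinite ampliation of the essential part of $\phi$, has exactly this kernel, contradicting your stated requirement $\ker\sigma=\ker\phi$), and one must then verify separately that $\ker(\psi\oplus\gamma)$ still equals $\ker\phi$. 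Second, the central multiplicity inequality is asserted rather than proved, and the bookkeeping is done over the wrong ideal: condition (2) yields only $\phi^{-1}(\mathcal{K}(\mathcal{H}))\subseteq\psi^{-1}(\mathcal{K}(\mathcal{K}))$ (via $\mathrm{rank}(\psi(f_\epsilon(|A|)))\le\mathrm{rank}(\phi(f_\epsilon(|A|)))$ for $f_\epsilon(t)=\max(t-\epsilon,0)$), the reverse inclusion can fail, and the multiplicities to compare are those of the nonzero parts of $\psi$ and $\phi$ restricted to the single ideal $\mathcal{J}=\phi^{-1}(\mathcal{K}(\mathcal{H}))$. To deduce $n_\pi(\psi|_{\mathcal{J}})\le n_\pi(\phi|_{\mathcal{J}})$ from the scalar rank inequality you need a positive $A\in\mathcal{J}$ with $\pi(A)$ of rank one and $\pi'(A)=0$ simultaneously for every other irreducible occurring in \emph{either} decomposition; this is the separation argument underlying uniqueness of multiplicity for representations by compact operators (it uses that the combined family is countable and consists of mutually inequivalent irreducibles whose images contain the compacts), not Kadison transitivity, and it is precisely the step a complete proof must supply. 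Finally, the theorem as stated does not assume $\mathcal{H}$ and $\mathcal{K}$ separable, while the Voiculescu theorem you invoke does; handling the nonseparable case is the main content of \cite{Hadwin1} and is not reachable by your route.
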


In her 1994 doctoral dissertation (see also \cite{Don1}), Huiru Ding extended some of these results to the case in which $\mathcal{B}(\mathcal{H})$ is replaced by a von Neumann algebra. The following are some terms adopted in this paper.

Suppose $\mathcal{R}$ is a von Neumann algebra and
$T\in \mathcal{R}$. We define the $\mathcal{R}$-\textrm{rank} of $T$ (denoted
by $\mathcal{R}$-\textrm{rank}$\left(T\right)$) to be the \emph{Murray-von
Neumann equivalence class} of the projection onto the closure of
\textrm{ran}$\left(T\right)$. If $\mathcal{A}$ is a unital C*-algebra and
$\phi, \psi:\mathcal{A}\rightarrow \mathcal{R}$ are unital $\ast$-homomorphisms,
we say that $\phi$ and $\psi$ are \emph{approximately equivalent in} $\mathcal{R}$, denoted by $\phi \sim_{a}\psi$ $\left(  \mathcal{R}\right)$, if there is a net $\{U_{\lambda}\}_{\lambda\in\Lambda}$ of unitary operators
\textbf{in} $\mathcal{R}$ such that, for every $A\in \mathcal{A}$,%
\[
\lim_{\lambda}\left \Vert U_{\lambda}^{\ast}\phi \left(A\right)  U_{\lambda
}-\psi \left(A\right)  \right \Vert =0.
\]

\begin{theorem}\label{DH}
Suppose $\mathcal{A}$ is a unital C*-algebra that is a direct limit of finite
direct sums of commutative C*-algebras tensored with matrix algebras, and if
$\mathcal{R}$ is a von Neumann algebra acting on a separable Hilbert space,
then the following are equivalent:

\begin{enumerate}
\item $\phi \sim_{a}\psi$ $\left(  \mathcal{R}\right).$

\item For every $A\in \mathcal{A}$,%
\[
\mathcal{R}\text{\textrm{-}}\mathrm{rank}\left(\phi\left(A\right)\right)
=\mathcal{R}\text{\textrm{-}}\mathrm{rank}\left(\psi\left(A\right)\right).
\]

\end{enumerate}
\end{theorem}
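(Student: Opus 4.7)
The plan is to establish the two implications separately; $(1)\Rightarrow(2)$ is routine, while $(2)\Rightarrow(1)$ is the substantive direction and follows a Weyl--von Neumann strategy adapted to $\mathcal{R}$.

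For $(1)\Rightarrow(2)$, the argument rests on two facts: conjugation by a unitary in $\mathcal{R}$ carries the range projection of $T\in\mathcal{R}$ to a Murray--von Neumann equivalent projection in $\mathcal{R}$, and $\mathcal{R}\text{-}\mathrm{rank}$ is lower semicontinuous under norm convergence in $\mathcal{R}$. Applied to $U_\lambda^\ast\phi(A)U_\lambda\to\psi(A)$, this yields $\mathcal{R}\text{-}\mathrm{rank}(\psi(A))\preceq\mathcal{R}\text{-}\mathrm{rank}(\phi(A))$, and swapping the roles of $\phi$ and $\psi$ gives the reverse comparison, hence equality.

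For $(2)\Rightarrow(1)$, I proceed by successive reductions. First, since $\mathcal{A}=\varinjlim\mathcal{A}_k$ with each $\mathcal{A}_k$ a finite direct sum of algebras of the form $C(X)\otimes M_n$, a standard diagonal argument (legitimate because bounded subsets of $\mathcal{R}$ on a separable Hilbert space are $\ast$-strongly metrizable) reduces the problem to a single building block, and then to a single summand $C(X)\otimes M_n$. Second, passing from $\mathcal{R}$ to $M_n(\mathcal{R})$ and exploiting the compatibility of Murray--von Neumann equivalence with matrix amplification, I reduce to the purely commutative case $\mathcal{A}=C(X)$. Third, treat $\mathcal{A}=C(X)$ by discretization: let $E_\phi,E_\psi$ denote the spectral measures on $X$ associated to $\phi,\psi$; given $\varepsilon>0$, choose a finite Borel partition $X=\bigsqcup_{i=1}^{m}X_i$ of diameter less than $\varepsilon$ and points $x_i\in X_i$, and set $p_i=E_\phi(X_i)$, $q_i=E_\psi(X_i)$. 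Then $\|\phi(f)-\sum_{i}f(x_i)p_i\|\le\varepsilon\|f\|$ and similarly for $\psi$. If each $p_i$ is Murray--von Neumann equivalent to $q_i$ in $\mathcal{R}$, assembling partial isometries $v_i\in\mathcal{R}$ with orthogonal sources and ranges produces a unitary $U=\sum_i v_i\in\mathcal{R}$ with $\|U^\ast\psi(f)U-\phi(f)\|\le 2\varepsilon\|f\|$; running this construction over $\varepsilon=1/n$ on a countable dense subset of the unit ball of $\mathcal{A}$ produces the required net of unitaries.

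The main obstacle is producing the Murray--von Neumann equivalences $p_i\sim q_i$ from a hypothesis that a priori controls only ranks of images of continuous functions, not of Borel indicator functions. The plan is to approximate each spectral projection $E_\phi(X_i)$ in the strong operator topology by range projections of continuous bump functions supported in slightly enlarged neighborhoods of $X_i$, use the stability of Murray--von Neumann equivalence under strong operator limits of monotone nets of projections in $\mathcal{R}$, and thereby transfer the rank equality from continuous functional calculus to the Borel spectral projections. A refinement step, choosing nested partitions so that the bump-function neighborhoods shrink toward the partition sets, then yields the required matching and, combined with the discretization above, completes the proof.
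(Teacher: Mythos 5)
You should first be aware that the paper contains no proof of Theorem \ref{DH}: it is quoted as a known result from Huiru Ding's 1994 dissertation and from \cite{Don1}, so there is no internal argument to compare yours against. Judged on its own terms, your outline follows the standard Berg/Weyl--von Neumann strategy that Ding and Hadwin also use (reduce to $C(X)$, discretize the spectral measure, match spectral projections by partial isometries), and your treatment of the easy direction $(1)\Rightarrow(2)$ is essentially correct.

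The gap sits exactly at the step you yourself flag as ``the main obstacle,'' and the tool you propose there is false: Murray--von Neumann equivalence is \emph{not} stable under strong operator limits of monotone (decreasing) nets of projections. In $\mathcal{B}(\ell^{2})$ let $P_{n}$ be the projection onto $\overline{\mathrm{span}}\{e_{1},e_{n+1},e_{n+2},\dots\}$ and $Q_{n}$ the projection onto $\overline{\mathrm{span}}\{e_{1},e_{2},e_{n+1},e_{n+2},\dots\}$; then $P_{n}\sim Q_{n}$ for every $n$, but the decreasing limits have ranks $1$ and $2$. Since your outer approximation of $E_{\phi}(X_{i})$ by bump functions on shrinking neighborhoods is precisely a decreasing limit, the transfer from $R(\phi(g))\sim R(\psi(g))$ to $E_{\phi}(X_{i})\sim E_{\psi}(X_{i})$ does not follow. (Increasing limits are fine, by the Schr\"oder--Bernstein argument for projections, which is also what makes your $(1)\Rightarrow(2)$ work.) The repair is different: for an \emph{open} set $U$ the spectral projection $E_{\phi}(U)$ already equals $R(\phi(g))$ for a single continuous $g$ vanishing exactly off $U$ (this is how the paper's Lemma \ref{Tool-1} builds $\hat\phi$), so hypothesis (2) gives $E_{\phi}(U)\sim E_{\psi}(U)$ with no limit taken; to pass from open sets to a usable Borel partition one should decompose $\mathcal{R}$ over its center into factors (as the paper does in proving Theorem \ref{Thm-2.2}), use the dimension function on each factor so that $B\mapsto d(E_{\phi}(B))$ and $B\mapsto d(E_{\psi}(B))$ become regular Borel measures agreeing on open sets and hence on all Borel sets, and choose the partition so that the boundaries $\partial X_{i}$ carry zero spectral mass --- only countably many ``levels'' can fail this, exactly as in the choice of the $\alpha_{k}$ in the proof of Theorem \ref{App-II-infty}. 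Two smaller points: assembling $U=\sum_{i}v_{i}$ into a unitary needs $\sum_{i}p_{i}=\sum_{i}q_{i}=I$ (unitality), and the reduction from $C(X)\otimes M_{n}$ to $C(X)$ requires first conjugating the two images of the matrix-unit system onto each other, which again uses the rank hypothesis on the minimal projections. These are repairable, but as written $(2)\Rightarrow(1)$ does not go through.
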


In the setting of von Neumann algebras. The compact ideal $\mathcal{K}(\mathcal{H})$ of $\mathcal{B}(\mathcal{H})$ can be extended in the following way.

In the current paper, we let $\mathcal{R}$ be a countably
decomposable, properly infinite von Neumann algebra with a faithful
normal semifinite tracial weight $\tau $. Let
\begin{equation}\label{compact-ideal-R-tau}
\begin{aligned}
  \mathcal P\mathcal F(\mathcal R,\tau) &=\{ P  \ : \ P=P^*=P^2\in \mathcal R \text { and } \tau(P)<\infty\},\\
  \mathcal F(\mathcal R,\tau) &= \{XPY \ : \ P\in  \mathcal P\mathcal F(\mathcal R,\tau)  \text { and } X,Y\in\mathcal R\},\\
  \mathcal K(\mathcal R,\tau)&= \text{$\|\cdot\|$-norm closure of }  \mathcal F(\mathcal R,\tau) \text{ in } \mathcal R,
\end{aligned}
\end{equation}%
be the sets of finite rank projections, finite rank operators, and compact
operators respectively, in $(\mathcal{R},\tau )$.

For a von Neumann algebra $\mathcal{R}$, denoted by $\mathcal{K}(\mathcal{R})$ the $\| \cdot\|$-norm
closed ideal generated by finite projections in $\mathcal{R}$. In general, $\mathcal{K}(\mathcal{R},\tau)$ is a subset of $\mathcal{K}(\mathcal{R})$. That is because a finite projection might
not be a finite rank projection with respect to $\tau$. However, if $\mathcal{R}$ is a countably decomposable semifinite factor, then it is true that
\[
\mathcal{K}(\mathcal{R},\tau) = \mathcal{K}(\mathcal{R})
\]
for a faithful, normal,
semifinite tracial weight $\tau$.

To extend the definition of approximate equivalence of two unital $\ast$-homomorphisms of a separable C$^{\ast}$-algebra $\mathcal{A}$ into $\mathcal{R}$ (\textbf{relative to $\mathcal{K}(\mathcal{R},\tau)$}), we need to develop the following notation and definitions.

Let $\mathcal{H}$ be an infinite dimensional separable Hilbert space and let $\mathcal{B}(\mathcal{H})$ be the set of bounded linear operators on $\mathcal{H}$. Suppose that $\{E_{i,j}\}_{i,j= 1}^\infty$ is a system of
matrix units of $\mathcal{B}(\mathcal{H})$.

For a countably
decomposable, properly infinite von Neumann algebra $\mathcal{R}$ with a faithful
normal semifinite tracial weight $\tau $.
There exists a sequence $\{V_i\}_{i = 1}^\infty$ of partial isometries in $\mathcal{R}$ such that
\begin{equation*}
V_iV_i^*=I_{\mathcal{R}}, \ \ \ \ \sum_{i = 1}^\infty V_i^*V_i=I_{\mathcal{R}}, \ \ \ \ \text{ and } V_jV_i^*=0 \text { when } i\ne j.
\end{equation*}

Let $\mathcal{R}\otimes \mathcal{B}(\mathcal{H})$ be a von Neumann algebra
tensor product of $\mathcal{R}$ and $\mathcal{B}(\mathcal{H})$.

\begin{definition}
For all $X\in \mathcal{R }$ and all \ $\sum_{i,j= 1}^\infty
X_{i,j}\otimes E_{i,j}\in \mathcal{R}\otimes \mathcal{B}(\mathcal{H})$,
define
\begin{equation*}
\phi: \mathcal{R}\rightarrow \mathcal{R}\otimes \mathcal{B}(\mathcal{H}) \ \
\ \text{ and } \ \ \psi: \mathcal{R}\otimes \mathcal{B}(\mathcal{H})
\rightarrow \mathcal{R}
\end{equation*}
by
\begin{equation*}
\phi(X) =\sum_{i,j= 1}^\infty (V_iXV^*_j)\otimes E_{i,j} \ \ \ \ \text{ and }
\ \ \ \ \psi( \sum_{i,j= 1}^\infty X_{i,j}\otimes E_{i,j} )= \sum_{i,j=1}^\infty V_i^*X_{i,j}V_j.
\end{equation*}
\end{definition}
By Lemma 2.2.2 of \cite{Li}, both $\phi$ and $\psi$ are normal $*$-homomorphisms
satisfying
\begin{equation*}
\text{ $\psi \circ \phi=id_{\mathcal{R}}$ \qquad and \qquad $\phi \circ
\psi=id_{\mathcal{R}\otimes \mathcal{B}(\mathcal{H})}.$}
\end{equation*}

\begin{definition}\label{weight++}
Define a mapping $\tilde \tau :(\mathcal{R}\otimes \mathcal{B}(\mathcal{H}))^+\rightarrow
[0,\infty]$ to be
\begin{equation*}
\tilde \tau (y)=\tau(\psi(y)), \qquad \forall \ y\in
(\mathcal{R}\otimes \mathcal{B}(\mathcal{H}))^+.
\end{equation*}
\end{definition}
By the above Definition, the following are proved in Lemma 2.2.4 of \cite{Li}:
\begin{enumerate}
\item[(i)] $\displaystyle\tilde \tau$ is a faithful, normal, semifinite tracial weight
of $\mathcal{R}\otimes \mathcal{B}(\mathcal{H})$.

\item[(ii)] $\displaystyle\tilde \tau(\sum_{i,j=1}^\infty X_{i,j}\otimes
E_{i,j})=\sum_{i=1}^\infty \tau(X_{i,i})$ for all \ $\displaystyle\sum_{i,j=1}^\infty X_{i,j}\otimes E_{i,j}\in (\mathcal{R}\otimes \mathcal{B}(\mathcal{H}))^+$.

\item[(iii)]
\begin{equation*}
\begin{aligned}
\mathcal P\mathcal F(\mathcal R\otimes \mathcal B(\mathcal H), \tilde \tau)&=\phi(\mathcal P\mathcal F(\mathcal R,\tau)), \quad \\
\mathcal F(\mathcal R\otimes \mathcal B(\mathcal H), \tilde \tau)&=\phi(\mathcal F(\mathcal R,\tau)), \quad \\
\mathcal K(\mathcal R\otimes \mathcal B(\mathcal H), \tilde \tau)&=\phi(\mathcal K(\mathcal R,\tau)).
\end{aligned}
\end{equation*}
\end{enumerate}

\begin{remark}
It shows that $\tilde \tau$ is a natural extension of $\tau$
from $\mathcal{R}$ to $\mathcal{R}\otimes \mathcal{B}(\mathcal{H})$. If no
confusion arises, $\tilde\tau$ will be also denoted by $\tau$. By Proposition $2.2.9$ of \cite{Li}, the ideal $\mathcal K(\mathcal R\otimes \mathcal B(\mathcal H), \tilde \tau)$ is independent of the choice of the
system of matrix units $\{E_{i,j}\}_{i,j=1}^\infty$  of $\mathcal{B}(\mathcal{H})$ and the choice of the family $\{V_i\}_{i =1}^\infty$ of
partial isometries in $\mathcal{R}$.
\end{remark}

Now we are ready to introduce the definition of approximate equivalence of $\ast$-homomorphisms of a separable C$^{\ast}$-algebra into $\mathcal{R}$ relative to $\mathcal{K}_{}(\mathcal{R},\tau)$.

Let $\mathcal{A}$ be a separable C$^*$-subalgebra of $\mathcal{R}$ with an identity $I_{\mathcal{A}}$.  Suppose that $\psi$ is a positive mapping from $\mathcal{A}$ into $\mathcal{R}$ such that $\psi(I_{\mathcal{A}})$ is a projection in $\mathcal{R}$. Then for all $0\le X\in \mathcal{A}$, we have $0\le \psi(X) \le \|X\|\psi(I_{\mathcal{A}})$. Therefore, it follows that
\[
\psi(X)\psi(I_{\mathcal{A}})=\psi(I_{\mathcal{A}})\psi(X)=\psi(X)
\]
for all positive $X\in \mathcal{A}$. In other words, $\psi(I_{\mathcal{A}})$ can be viewed as an identity of $\psi(\mathcal{A})$.
Or, $\psi(\mathcal{A})\subseteq \psi(I_{\mathcal{A}})\mathcal{R}\psi(I_{\mathcal{A}})$.

\begin{definition}$($Definition 2.3.1 of \cite{Li}$)$\label{AEC}
Suppose $\{E_{i,j}\}_{i,j\ge 1}$ is a system of
matrix units of $\mathcal{B}(\mathcal{H})$. Let $M,N\in \mathbb{N}\cup\{\infty\}$. Suppose that $\psi_1,\ldots, \psi_M$ and $\phi_1,\ldots, \phi_N$ are positive mappings from $\mathcal{A}$ into $\mathcal{R}$ such
that $\psi_1(I_{\mathcal{A}}),\ldots, \psi_M(I_{\mathcal{A}})$, $\phi_1(I_{\mathcal{A}}),\ldots, \phi_N(I_{\mathcal{A}})$ are projections in $\mathcal{R}$.

\begin{enumerate}
\item[(a)] Let $\mathcal{F}\subseteq \mathcal{A}$ be a finite subset and $\epsilon>0$. Say
\begin{equation*}
\text{ $\psi_1\oplus \cdots \oplus\psi_M$ is \emph{$(\mathcal{F},\epsilon)$-strongly-approximately-unitarily-equivalent} to $\phi_1\oplus\cdots\oplus\phi_N$ over $\mathcal{A}$,}
\end{equation*}
denoted by
\begin{equation*}
\psi_1\oplus\psi_2\oplus\cdots \oplus\psi_M \sim_{\mathcal{A}%
}^{(\mathcal{F},\epsilon)} \phi_1\oplus \phi_2\oplus\cdots\oplus \phi_N, \qquad \mod \mathcal{K}_{}(\mathcal{R},\tau) 
\end{equation*}
if there exists a partial isometry $V$ in $\mathcal{R}\otimes \mathcal{B}(\mathcal{H})$ such that

\begin{enumerate}
\item[(i)] $\displaystyle V^*V= \sum_{i=1}^M \psi_i(I_{\mathcal{A}})\otimes E_{i,i}$ and $\displaystyle VV^*= \sum_{i=1}^N \phi_i(I_{\mathcal{A}})\otimes E_{i,i}$;

\item[(ii)] $\displaystyle  \sum_{i=1}^M \psi_i(X)\otimes E_{i,i} - V^*\left(\sum_{i=1}^N \phi_i(X)\otimes E_{i,i}\right ) V \in \mathcal{K}(\mathcal{R}\otimes \mathcal{B}(\mathcal{H}),\tau)$ for all $X\in \mathcal{A}$;

\item[(iii)] $\displaystyle \|\sum_{i=1}^M \psi_i(X)\otimes
E_{i,i} - V^*\left (\sum_{i=1}^N \phi_i(X)\otimes E_{i,i}\right ) V
\|<\epsilon$ for all $X\in \mathcal{F}$.
\end{enumerate}

\item[(b)] Say
\begin{equation*}
\text{ $\psi_1\oplus \cdots \oplus\psi_M$ is \emph{strongly-approximately-unitarily-equivalent } to $\phi_1\oplus\cdots\oplus\phi_N$ over $\mathcal{A}$,}
\end{equation*}
denoted by
\begin{equation*}
\psi_1\oplus\psi_2\oplus\cdots \oplus\psi_M \sim_{\mathcal{A}} \phi_1\oplus
\phi_2\oplus\cdots\oplus \phi_N, \qquad \mod \mathcal{K}_{}(\mathcal{R},\tau)
\end{equation*}
if, for any finite subset $\mathcal{F}\subseteq \mathcal{A}$ and $\epsilon>0$,
\begin{equation*}
\psi_1\oplus\psi_2\oplus\cdots \oplus\psi_M \sim_{\mathcal{A}}^{(\mathcal{F},\epsilon)} \phi_1\oplus \phi_2\oplus\cdots\oplus \phi_N, \qquad \mod \mathcal{K}_{}(\mathcal{R},\tau).
\end{equation*}
\end{enumerate}
\end{definition}

In this paper we address the question of approximate summands and ``compact'' operators for semifinite von Neumann algebras $\mathcal{R}$ and commutative separable C*-algebras $\mathcal{A}$.  In Section 2, relative to finite von Neumann algebras, we characterize the approximate summands of $\ast$-homomorphisms by virtue of a natural condition. Precisely, we prove the following theorem.

\vspace{0.2cm} {T{\Small HEOREM}} \ref{Thm-2.2}
\emph{Suppose $\mathcal{A}$ is a separable unital commutative C*-algebra and
$\mathcal{R}$ is a finite von Neumann algebra acting on a separable Hilbert
space $\mathcal{H}$. Suppose $P$ is a projection in $\mathcal{R}$, $\pi:\mathcal{A}\rightarrow \mathcal{R}$ is a unital $\ast$-homomorphism and $\rho:\mathcal{A}\rightarrow P\mathcal{R}P$ is a unital $\ast$-homomorphism such that, for every $X\in \mathcal{A}$, we have%
\[
\mathcal{R}\text{\textrm{-}}\mathrm{rank}\left(  \rho \left(  X\right)
\right)\leq \mathcal{R}\text{\textrm{-}}\mathrm{rank}\left(\pi\left(
X\right)\right).
\]
Then there is a unital $\ast$-homomorphism $\gamma:\mathcal{A}\rightarrow P^{\perp}\mathcal{R}P^{\perp}$ such that
\[
\gamma \oplus \rho \sim_{a}\pi \text{ }\left(  \mathcal{R}\right).
\]}

In Section 3, for two $\ast$-homomorphisms $\phi$ and $\psi$ of a commutative $C^{\ast}$-algebra into a semifinite von Neumann factor $\mathcal{R}$ with a faithful normal semifinite tracial weight $\tau$, the main theorem states that the approximately unitary equivalence of $\phi$ and $\psi$ implies that these two $\ast$-homomorphisms are strongly-approximately-unitarily-equivalent over $\mathcal{A}$ (as in Definition \ref{AEC}). Precisely, we obtian the following theorem. 

\vspace{0.2cm} {T{\Small HEOREM}} \ref{App-II-infty}
\emph{Let $X$ be a compact metric space. Suppose that $\phi$ and $\psi$ are two unital $\ast$-homomorphisms of $C(X)$ into a countably decomposable,  properly
infinite,  semifinite factor $\mathcal{R}$ with a faithful normal semifinite tracial weight $\tau$ acting on a separable Hilbert space $\mathcal{H}$. Then the following are equivalent:
\begin{enumerate}
\item $\phi \sim_{a}\psi \, \left(\mathcal{R}\right)$,
\item $\phi\sim_{C(X)} \psi, \mod \mathcal{K}_{}(\mathcal{R},\tau)$.
\end{enumerate}}

\section{Representations relative to finite von Neumann algebras}

\begin{theorem}\label{2.1}
Suppose $\mathcal{A}$ is a separable unital commutative C*-algebra and
$\mathcal{R}$ is a type II$_1$ factor with a faithful normal normalized trace $\tau$ acting on a separable Hilbert
space $\mathcal{H}$. Suppose $P$ is a projection in $\mathcal{R}$, $\pi:\mathcal{A}\rightarrow \mathcal{R}$ is a unital $\ast$-homomorphism and $\rho:\mathcal{A}\rightarrow P\mathcal{R}P$ is a unital $\ast$-homomorphism such that, for every $X\in \mathcal{A}$, we have%
\[
\mathcal{R}\text{\textrm{-}}\mathrm{rank}\left(  \rho \left(  X\right)
\right)\leq \mathcal{R}\text{\textrm{-}}\mathrm{rank}\left(\pi\left(
X\right)\right).
\]
Then there is a unital $\ast$-homomorphism $\gamma:\mathcal{A}\rightarrow P^{\perp}\mathcal{R}P^{\perp}$ such that
\[
\gamma \oplus \rho \sim_{a}\pi \text{ }\left(  \mathcal{R}\right).
\]

\end{theorem}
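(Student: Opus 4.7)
The plan is to invoke Theorem \ref{DH}, reducing the task to finding $\gamma$ that makes the $\mathcal{R}$-ranks agree everywhere. Since $\mathcal{A}$ is separable commutative unital, identify $\mathcal{A}=C(K)$ for some compact metric space $K$, so that $\pi$ and $\rho$ correspond to projection-valued spectral measures $E_\pi,E_\rho$ on the Borel sets of $K$, and $\mu_\pi=\tau\circ E_\pi$, $\mu_\rho=\tau\circ E_\rho$ are positive regular Borel measures with $\mu_\pi(K)=1$ and $\mu_\rho(K)=\tau(P)$. In a II$_1$ factor the $\mathcal{R}$-rank of an operator $T$ is just $\tau$ of its range projection, and for $X\in C(K)$ the range projection of $\pi(X)$ is $E_\pi(\{X\neq 0\})$. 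Every open $U\subseteq K$ has the form $\{X\neq 0\}$ for some $X\in C(K)$ (take $X(t)=d(t,K\setminus U)$), so the rank hypothesis reads $\mu_\rho(U)\leq\mu_\pi(U)$ on open sets, hence $\mu_\rho\leq\mu_\pi$ globally by outer regularity.

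The heart of the argument is to construct $\gamma:C(K)\to P^\perp\mathcal{R}P^\perp$ with spectral measure $E_\gamma$ satisfying $\tau\circ E_\gamma=\mu$, where $\mu=\mu_\pi-\mu_\rho$ is a positive regular Borel measure on $K$ of total mass $\tau(P^\perp)$. If $P=I$ then $\mu=0$, $P^\perp=0$, and $\gamma=0$ works; otherwise $P^\perp\mathcal{R}P^\perp$ is itself a II$_1$ factor with separable predual, and one embeds $L^\infty(K,\mu)$ into $P^\perp\mathcal{R}P^\perp$ trace-preservingly, using a diffuse MASA isomorphic to $L^\infty([0,\tau(P^\perp)])$ to absorb any continuous part of $\mu$ and using individual orthogonal projections of prescribed traces for any atomic part, both always available inside a II$_1$ factor. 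Composing with the inclusion $C(K)\hookrightarrow L^\infty(K,\mu)$ produces $\gamma$.

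Because $\gamma(X)$ and $\rho(X)$ lie under the orthogonal projections $P^\perp$ and $P$, their range projections are mutually orthogonal and the range projection of $(\gamma\oplus\rho)(X)$ is their sum. Writing $R(\,\cdot\,)$ for the range projection, we obtain
\[
\tau(R((\gamma\oplus\rho)(X)))\;=\;\mu(\{X\neq 0\})+\mu_\rho(\{X\neq 0\})\;=\;\mu_\pi(\{X\neq 0\})\;=\;\tau(R(\pi(X)))
\]
for every $X\in\mathcal{A}$, matching the $\mathcal{R}$-ranks, and Theorem \ref{DH} then delivers $\gamma\oplus\rho\sim_a\pi$ $(\mathcal{R})$. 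I expect the construction of $E_\gamma$ inside $P^\perp\mathcal{R}P^\perp$ to be the main step requiring care — the ingredients are standard, but one must verify that the atomic and continuous parts of $\mu$ combine into a single $\sigma$-additive projection-valued measure, and that the resulting map on $C(K)$ is a genuine unital $\ast$-homomorphism with the prescribed trace composition.
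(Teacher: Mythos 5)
Your proposal is correct and follows essentially the same route as the paper: both convert the rank hypothesis into a domination $\mu_\rho\le\mu_\pi$ of the scalar spectral measures, realize the difference measure $\mu_\pi-\mu_\rho$ as the trace-composed spectral measure of a representation into $P^{\perp}\mathcal{R}P^{\perp}$, and conclude by matching $\mathcal{R}$-ranks and invoking Theorem \ref{DH}. The only differences are presentational: the paper first reduces $\mathcal{A}$ to a singly generated $C^{*}(A)$ and strips off the atomic parts of the measures by hand, whereas you work directly with the projection-valued measure on $K$ and package the atomic/diffuse split into the standard trace-preserving embedding of $L^{\infty}(K,\mu)$ into a II$_1$ corner.
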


\begin{proof}
It follows from Lemma $2.2$ of \cite{Takesaki} that $\pi$ and $\rho$ can be extended to normal unital $\ast$-homomorphisms with domain, the second dual $\mathcal{A}^{\# \#}$ of
$\mathcal{A}$, so that
\[
\mathcal{R}\text{\textrm{-}}\mathrm{rank}(\rho (X))\leq\mathcal{R}\text{\textrm{-}}\mathrm{rank}(\pi(X))
\]
holds for all $X\in \mathcal{A}^{\# \#}$. Since $\mathcal{A}$ is separable, we
can choose a countable family $\left \{  Q_{1},Q_{2},\ldots \right \}  $ of
projections in $\mathcal{A}^{\# \#}$ such that%
\[
\mathcal{A}\subseteq C^{\ast}\left(  Q_{1},Q_{2},\ldots \right)  .
\]
However, if we let $A=\sum_{k=1}^{\infty}{3^{-k}}Q_{k}$, then $C^{\ast}\left(A\right)  =C^{\ast}\left(Q_{1},Q_{2},\ldots \right)$. It is also true that, for every $X\in C^{\ast}\left(A\right)$,%
\[
\mathcal{R}\text{\textrm{-}}\mathrm{rank}\left(  \rho \left(  X\right)\right)  \leq \mathcal{R}\text{\textrm{-}}\mathrm{rank}\left(  \pi \left(X\right)  \right)  .
\]
It is easily seen that if we prove the theorem for the restrictions of $\pi$
and $\rho$ to $C^{\ast}\left(A\right)$, we will have proved the theorem
for $\pi$ and $\rho$ on $\mathcal{A}$. Hence we can assume that $\mathcal{A}%
=C^{\ast}\left(A\right)  $ and $0\leq A\leq 1$. Let $S=\rho \left(A\right)
\in P\mathcal{R}P$ and $T=\pi \left(A\right)  \in \mathcal{R}$. Thus the following inequality
\[
\mathcal{R}\text{\textrm{-}}\mathrm{rank}\left(  f\left(  S\right)P\right)
\leq \mathcal{R}\text{\textrm{-}}\mathrm{rank}\left(  f\left(  T\right)\right)
\]
holds for every $f\in C\left(  \sigma \left(A\right)  \right)$. This leads to the inequality
\[
\tau \left(  f\left(  S\right)  P\right)  \leq \tau \left(  f\left(  T\right)
\right)
\]
for every $f\in C\left(  \sigma \left(  A\right)  \right)$. Thus the Riesz representation theorem implies that there exist two regular
Borel measures $\mu_{\rho}$ and $\mu_{\pi}$ on $\sigma (A)  $ such
that the inequality
\[
\tau \left(  f\left(  S\right)  P\right)  =\int_{\sigma \left(  A\right)  }%
fd\mu_{\rho}\leq \int_{\sigma \left(  A\right)  }fd\mu_{\pi}=\tau \left(
f\left(  T\right)  \right)
\]
holds for every $f\in C\left(  \sigma \left( A\right)  \right)$.
It follows from Lusin's theorem that the preceding line holds for every
bounded Borel measurable function $f:\sigma \left( A\right)  \rightarrow
\mathbb{C}$. Hence $\mu_{\rho}\leq \mu_{\pi}$ and, for every $z\in \sigma \left(
A\right)$, we have $\tau \left(  \chi_{\left \{  z\right \}  }\left(  S\right)
\right)  \leq \tau \left(  \chi_{\left \{  z\right \}  }\left(  T\right)  \right)$.

Since $\tau$ is faithful, the set $L_{S}$ of $z\in \sigma \left(  S\right)$
such that $\chi_{\left \{  z\right \}  }\left(  S\right)  \neq0$ is countable. Hence $\sum_{z\in L_{S}}z\chi_{\left \{  z\right \}  }\left(  S\right)  $ is a
direct summand of $S$ and $\sum_{z\in L_{S}}z\chi_{\left \{  z\right \}
}\left(  T\right)  $ is a summand of $T$.

Since, for each $z\in L_{S}$, the
projection $\chi_{\left \{  z\right \}  }\left(  S\right)$ is unitarily
equivalent to a subprojection of $\chi_{\left \{  z\right \}  }\left(  T\right)
$, without loss of generality,
$\sum_{z\in L_{S}}z\chi_{\left \{  z\right \}  }\left(
S\right)$ can be assumed to be a direct summand of $T$. Thus this summand can be removed from
both $S$ and $T$. Therefore, it can be assumed that $S$ has no eigenvalues.

By the same way, the set $L_{T}=\left \{z\in \sigma \left( T\right)  :\chi_{\left \{ z\right \}  }\left(  T\right)  \neq0\right \} $ is countable.
Hence $S\chi_{L_{T}}\left(  S\right)  =0$. Therefore, for every bounded nonnegative measurable function
$f:\mathbb{R}\rightarrow \mathbb{R}$, we have
\[
\tau \left(  f\left(  S\right)  P\right)  =\tau \left(  \left(  \chi
_{\mathbb{C}\backslash L_{T}}f\right)  \left(  S\right)  P\right)  \leq
\tau \left(  \chi_{\mathbb{C}\backslash L_{T}}\left(  T\right)  f\left(
T\right)  \right)  d\mu_{\pi}.
\]
This yields that $T$ can be replaced with $T\left(  1-\chi_{\mathbb{C}\backslash L_{T}}\left(  T\right)  \right)$ and $\mathcal{R}$ can be replaced with
\[
\left(1-\chi_{\mathbb{C}\backslash L_{T}}\left(  T\right)  \right)  \mathcal{R}\left(  1-\chi_{\mathbb{C}\backslash L_{T}}\left(  T\right)  \right).
\]
Hence we can assume that $\chi_{L_{T}}\left(  T\right)  =0$.

Similarly, since the equality
\[
f\left(  S\right)  =\left(  \chi_{\sigma \left(  S\right)  }f\right)  \left(
S\right)
\]
holds for every
bounded measurable function $f$, the operator $T$ can be replaced with $\chi_{\sigma \left(  S\right)  }\left(  T\right)T$. Hence we can assume that $\sigma \left(  S\right)  =\sigma \left(  T\right)
=\sigma \left( A\right)$. Thus $\mu_{\rho}\leq \mu_{\pi}$ are both non-atomic
measures with supports satisfying $\sigma \left(  S\right)  =\sigma \left(
T\right)  =\sigma \left( A\right)$.
Moreover, we have the equalities
\[
\mu_{\rho}\left(\sigma \left(  A\right)  \right)  =\tau \left(  P\right) \quad \mbox{ and }\quad \mu_{\pi}\left(  \sigma \left(  A\right)  \right)  =1.
\]
It follows that $\nu=$ $\mu_{\pi}-\mu_{\rho}$ is a nonatomic measure and $\nu \left(  \sigma \left(  A\right)\right)  =1-\tau(P)$. Thus there is a unital weak*-continuous
$\ast$-isomorphism $\Delta_{S}:L^{\infty}\left[  0,\tau(P)
\right]  \rightarrow L^{\infty}(\mu_{\rho})  $ such that for
every $f\in L^{\infty}[0,\tau(P)]$,
\[
\int_{\sigma \left(  A\right)  }\Delta_{S}\left(  f\right)  d\mu_{\rho}%
=\int_{0}^{\tau \left(  P\right)  }f\left(  x\right)  dx.
\]
Similarly, there is an isomorphism $\Delta_{\nu}:L^{\infty}\left[  \tau \left(
P\right)  ,1\right]  \rightarrow L^{\infty}\left(  \nu \right)  $ such that the equality
\[
\int_{\sigma \left(  A\right)  }\Delta_{\nu}\left(  f\right)  d\nu=\int_{\tau \left(  P\right)  }^{1}f\left(  x\right)  dx.
\]
holds for every $f\in L^{\infty}[\tau(P),1]$.

Moreover, we can choose a maximal chain $\mathcal{C}=\{Q_{t}:0\leq t\leq 1-\tau(P)\}$ of projections in $P^{\bot}\mathcal{R}P^{\bot}$ with $\tau(Q_{t})=t$
for $0\leq t\leq 1-\tau(P)$. Thus there exists a weak*-continuous unital $\ast$-homomorphism $\Delta_{1}:L^{\infty}[\tau(P),1]\rightarrow W^{\ast}\left(  \mathcal{C}\right)$
such that, for every $t\in[0,1-\tau(P)]$, we have $\Delta_{1}(\chi_{\left[ \tau( P),\tau( P)+t\right)}) =Q_{t}$, and such that,
for every $f\in L^{\infty}\left[  \tau(  P),1\right]  $ we have%
\[
\tau \left(  \Delta_{1}\left(  f\right)  \right)  =\int_{\tau \left(  P\right)}^{1}f\left(  x\right)  dx.
\]
Define $\Delta:C(  \sigma( A)  )  \rightarrow
P\mathcal{R}P+P^{\bot}\mathcal{R}P^{\bot}\subset \mathcal {R}$ by
\[
\Delta \left(  h\right)  =h\left(  S\right)  \oplus \left(  \Delta_{1}%
\circ \Delta_{\nu}^{-1}\right)  \left(  h\right)  .
\]
If $z(\lambda)  =\lambda$ is the identity map on $\sigma(A)$, then $\Delta(z)  =S\oplus B$ and
\begin{align*}
\tau(\Delta(h))&=\tau(h(S))+\tau(\Delta_{1}(\Delta_{\nu}^{-1}(h)))\\
&=\int_{\sigma(A)  }hd\mu_{\rho}+\int_{\tau(P)}^{1}\Delta_{\nu}^{-1}(  h)(  x) dx\\
&=\int_{\sigma(A)  }hd\mu_{\rho}+\int_{\sigma(A)}h d\nu=\int_{\sigma (A)  }hd\mu_{\pi}=\tau(  h( T)).
\end{align*}
Hence for every $h\in C(\sigma (A))$, we have
$\tau(  h(  S\oplus B)) =\tau (  h(T))$. Define a unital $\ast$-homomorphism $\gamma:C(\sigma(A))  \rightarrow P^{\bot}\mathcal{R}P^{\bot}$ by
\[
\gamma \left(  h\right)  =P^{\bot}h(  B).
\]
By Theorem \ref{DH}, the above equality yields that $\rho \oplus \gamma \sim_{a}\pi$. This completes the proof.
\end{proof}

\begin{theorem}\label{Thm-2.2}
Suppose $\mathcal{A}$ is a separable unital commutative C*-algebra and
$\mathcal{R}$ is a finite von Neumann algebra acting on a separable Hilbert
space $\mathcal{H}$. Suppose $P$ is a projection in $\mathcal{R}$, $\pi:\mathcal{A}\rightarrow \mathcal{R}$ is a unital $\ast$-homomorphism and $\rho:\mathcal{A}\rightarrow P\mathcal{R}P$ is a unital $\ast$-homomorphism such that, for every $X\in \mathcal{A}$, we have%
\[
\mathcal{R}\text{\textrm{-}}\mathrm{rank}\left(  \rho \left(  X\right)
\right)\leq \mathcal{R}\text{\textrm{-}}\mathrm{rank}\left(\pi\left(
X\right)\right).
\]
Then there is a unital $\ast$-homomorphism $\gamma:\mathcal{A}\rightarrow P^{\perp}\mathcal{R}P^{\perp}$ such that
\[
\gamma \oplus \rho \sim_{a}\pi \text{ }\left(  \mathcal{R}\right).
\]
\end{theorem}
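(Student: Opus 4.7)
The plan is to reduce the general finite von Neumann algebra setting to the type II$_1$ factor case of Theorem~\ref{2.1} by way of a central disintegration, and then to conclude using Theorem~\ref{DH}. Following the opening of the proof of Theorem~\ref{2.1}, I would first extend $\pi$ and $\rho$ to normal unital $*$-homomorphisms of $\mathcal{A}^{\#\#}$, choose a positive contraction $A\in \mathcal{A}^{\#\#}$ with $\mathcal{A}\subseteq C^{*}(A)$, and reduce to proving the theorem for $\pi|_{C^{*}(A)}$ and $\rho|_{C^{*}(A)}$. The rank hypothesis then persists for every spectral projection $\chi_{E}(A)$.

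Since $\mathcal{R}$ is a finite von Neumann algebra acting on a separable Hilbert space, the center $\mathcal{Z}(\mathcal{R})$ is countably generated abelian and $\mathcal{R}$ admits a central disintegration
\[
\mathcal{R}\;\cong\;\int_{Z}^{\oplus}\mathcal{R}_{z}\,d\mu(z),
\]
with $\mu$ a probability measure on a standard Borel space $Z$ and each fiber $\mathcal{R}_{z}$ a finite factor (either a matrix algebra $M_{n(z)}$ or a type II$_{1}$ factor). Disintegrating yields $P_{z}\in \mathcal{R}_{z}$, $\pi_{z}:\mathcal{A}\to \mathcal{R}_{z}$, and $\rho_{z}:\mathcal{A}\to P_{z}\mathcal{R}_{z}P_{z}$. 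Via the center-valued trace, the global hypothesis $\mathcal{R}\text{-}\mathrm{rank}(\rho(X))\le \mathcal{R}\text{-}\mathrm{rank}(\pi(X))$ translates into the pointwise fiber condition $\mathcal{R}_{z}\text{-}\mathrm{rank}(\rho_{z}(X))\le \mathcal{R}_{z}\text{-}\mathrm{rank}(\pi_{z}(X))$ for $\mu$-a.e.\ $z$ and every $X$ in a countable norm-dense subset of $\mathcal{A}$.

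For such $z$, I would apply Theorem~\ref{2.1} on type II$_{1}$ fibers and elementary multiplicity analysis on matrix-algebra fibers to produce a unital $*$-homomorphism $\gamma_{z}:\mathcal{A}\to P_{z}^{\perp}\mathcal{R}_{z}P_{z}^{\perp}$ with $\gamma_{z}\oplus \rho_{z}\sim_{a}\pi_{z}$ in $\mathcal{R}_{z}$. A measurable selection argument then assembles these into a normal unital $*$-homomorphism $\gamma=\int^{\oplus}\gamma_{z}\,d\mu(z):\mathcal{A}\to P^{\perp}\mathcal{R}P^{\perp}$. Fiberwise equality of $\mathcal{R}_{z}$-rank integrates to global equality of $\mathcal{R}$-rank, yielding $\mathcal{R}\text{-}\mathrm{rank}((\gamma\oplus \rho)(X))=\mathcal{R}\text{-}\mathrm{rank}(\pi(X))$ for every $X\in \mathcal{A}$; Theorem~\ref{DH} then delivers $\gamma\oplus \rho\sim_{a}\pi$ in $\mathcal{R}$, as required.

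The principal technical obstacle is the measurable selection step: one must verify that the explicit construction in the proof of Theorem~\ref{2.1} (the scalar measures $\mu^{z}_{\rho},\mu^{z}_{\pi}$ on $\sigma(A)$, the removal of atoms, and in particular the maximal chain of projections in $P_{z}^{\perp}\mathcal{R}_{z}P_{z}^{\perp}$ carrying the weak$^{*}$-continuous $*$-homomorphism $\Delta_{1}$) can be chosen in a Borel-measurable fashion in $z$. The separability of $\mathcal{H}$ keeps everything within a standard Borel framework, and this should follow from the usual measurable selection theorems in direct integral theory. As a more concrete alternative, one may split $\mathcal{R}$ into its type I$_{n}$ ($n\geq 1$) and type II$_{1}$ summands and construct $\gamma$ on each summand separately, trading elegance for explicit bookkeeping.
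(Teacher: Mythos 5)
Your proposal follows essentially the same route as the paper: reduce to $\mathcal{A}=C^{*}(A)$ via the bidual, disintegrate $\mathcal{R}$ over its center into finite factors, verify the rank inequality fiberwise almost everywhere using central projections and separability, apply the factor case (Theorem~\ref{2.1}) fiberwise with a measurable choice of the summand, and reassemble $\gamma$ as a direct integral before invoking Theorem~\ref{DH}. If anything, you are slightly more careful than the paper, which explicitly treats only the case where every fiber is a type II$_{1}$ factor, whereas you also flag the type I$_{n}$ fibers and the measurable-selection step as the points needing separate (routine) treatment.
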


\begin{proof}
First, we suppose $\mathcal{R}$ is a II$_{1}$ von Neumann algebra acting on a
separable Hilbert space $\mathcal {H}$. By applying the central decomposition technique of von Neumann algebras, we can then write
\begin{align}
\mathcal {H}=L^{2}\left(\mu,\ell^{2}\right)=\int_{\Omega}^{\oplus}\ell^{2}d\mu \left(\omega \right)
\mbox{ and } \mathcal{R}=\int_{\Omega}^{\oplus}\mathcal{R}_{\omega}d\mu \left(\omega\right),\notag
\end{align}
where $\left(\Omega,\mu\right)$ is a probability space and each $\mathcal{R}_{\omega}$ is a II$_{1}$ factor with
a unique trace $\tau_{\omega}$. Furthermore, a faithful normal tracial state
$\tau$ on $\mathcal{R}$ can be defined in the following form
\[
\tau \left(\int_{\Omega}^{\oplus}A\left(\omega \right)d\mu \left(\omega \right)\right)
=\int_{\Omega}\tau_{\omega}\left(A\left(\omega \right)\right)d\mu\left(\omega\right).
\]
Similarly, the projection $P\in\mathcal{R}$ can be written in the form
\[
P=\int_{\Omega}^{\oplus}P\left(\omega \right)  d\mu \left(\omega \right)
\]
where $  P\left(  \omega \right)$ is a projection in
$\mathcal{R}_{\omega}$ a.e. $\left(  \mu \right)$.
Thus $P\mathcal{R}P$ can be written in the form
\[
P\mathcal{R}P=\int_{\Omega}^{\oplus}P\left(\omega\right)\mathcal{R}%
_{\omega}P\left(  \omega \right)  d\mu \left(  \omega \right).
\]
By Theorem \ref{2.1}, we can assume that $\mathcal{A}%
=C^{\ast}\left(A\right)$ and $0\leq A\leq 1$.
Thus, for the identity map $z(\lambda)=\lambda$ on $\sigma(A)$, suppose that $\pi \left(  z\right)  =T$ and $\rho \left(  z\right)  =S\in P\mathcal{R}P$. Then
we can write%
\[
T=\int_{\Omega}^{\oplus}T\left(  \omega \right)  d\mu \left(  \omega \right)
\]
and
\[
S=PSP=\int_{\Omega}^{\oplus}S\left(\omega \right)  d\mu \left(\omega \right)
=\int_{\Omega}^{\oplus}P\left(  \omega \right)  S\left(\omega \right)  P\left(  \omega \right)  d\mu \left(  \omega \right).
\]
It follows that, for every $f\in C\left(  \sigma \left( A\right)  \right)  $,%
\[
\pi \left(  f\right)  =f\left(  T\right)  =\int_{\Omega}^{\oplus}f\left(
T\left(  \omega \right)  \right)  d\mu \left(  \omega \right)  =\int_{\Omega
}^{\oplus}\pi_{\omega}\left(  f\right)  d\mu \left(  \omega \right).
\]
If $f\in C\left(  \sigma \left( A\right)  \right)  $ and $Q_{f\left(
T\right)}$ is the projection onto the closure of the range of $f\left(
T\right)$, then
\[
Q_{f\left(  T\right)}=\int_{\Omega}^{\oplus}Q_{f\left(T\left(\omega \right)  \right)}d\mu \left(\omega \right).
\]
Similarly, if $Q_{f\left(  S\right) P}$ is the range projection of $f\left(
S\right)P$, then
\[
Q_{f\left(  S\right) P}=\int_{\Omega}^{\oplus}Q_{f\left(  S\left(  \omega \right)  \right)
P\left(  \omega \right)  }d\mu \left(  \omega \right).
\]
If $\mathcal{R}$\textrm{-}$\mathrm{rank}\left(  f\left(  S\right)  P\right))\leq \mathcal{R}$\textrm{-}$\mathrm{rank}\left(  f\left(  T\right)  \right))$, then $Q_{f\left(  S\right)  P}$ is Murray-von Neumann equivalent to a
subprojection of $Q_{f\left(  T\right)  }$. Hence, for every central
projection $D$, we have $DQ_{f\left(  S\right) P}$ is Murray-von Neumann
equivalent to a subprojection of $DQ_{f\left(  T\right)  }$. Thus for every
measurable subset $E\subset \Omega$,
\[
\tau \left(  \chi_{E}Q_{f\left(  S\right)  P}\right)  \leq \tau \left(  \chi
_{E}Q_{f\left(  T\right)  }\right)  ,
\]
which means that%
\[
\int_{E}\tau_{\omega}\left(Q_{f\left(  S\left(  \omega \right)  \right)P\left(  \omega \right)  }\right)  d\mu \left(  \omega \right)  \leq
\int_{E}\tau_{\omega}\left(  Q_{f\left(  T\left(  \omega \right)  \right)  }\right)d\mu \left(  \omega \right)  .
\]
This yields that
\[
\tau_{\omega}\left(  Q_{f\left(  S\left(  \omega \right)  \right)  P\left(
\omega \right)  }\right)  \leq \tau_{\omega}\left(  Q_{f\left(  T\left(
\omega \right)  \right)  }\right)  \text{ a.e. }\left(  \mu \right).
\]
Since $C(  \sigma(  A))$ is separable, we conclude
that, except for a subset of $\Omega$ of measure $0$, for all $f\in C(\sigma(  A))$,%
\[
\tau_{\omega}\left(  Q_{f\left(  S\left(  \omega \right)  \right) P\left(
\omega \right)  }\right)  \leq \tau_{\omega}\left(  Q_{f\left(  T\left(
\omega \right)  \right)  }\right).
\]
We can now use Theorem \ref{2.1} and measurably choose $B\left(
\omega \right)  =B\left(  \omega \right)  ^{\ast}\in P\left(  \omega \right)
^{\bot}\mathcal{R}_{\omega}P\left(  \omega \right)  ^{^{\bot}}$ and define
\[
\gamma_{\omega}:C\left( A\right)  \rightarrow P\left(  \omega \right)  ^{\bot
}\mathcal{R}_{\omega}P\left(  \omega \right)  ^{^{\bot}}%
\quad\mbox{ by }\quad
\gamma_{\omega}\left(  f\right)  =f\left(  B\left(  \omega \right)  \right)P\left(  \omega \right)  ^{\bot}%
\]
so that%
\[
\pi_{\omega}\sim_{a}\rho_{\omega}\oplus \gamma_{\omega}\text{ }\left(
\mathcal{R}_{\omega}\right)  .
\]
It easily follows that if we define $\gamma \left(  f\right)  =\int_{\Omega
}^{\oplus}\gamma_{\omega}\left(  f\right)  d\mu \left(  \omega \right)$, then
$\pi \sim_{a}\rho \oplus \gamma$ $\left(  \mathcal{R}\right)$. This completes the proof.
\end{proof}

\section{Representations relative to semifinite infinite von Neumann algebras}

As shown in the proof of Theorem \ref{2.1}, it is sufficient to replace a separable commutative C*-algebra with some certain $C(X)$ on a compact metric space $X$. Suppose that $\mathcal{R}$ is a countably decomposable,  properly infinite, semifinite von Neumann algebra with a faithful, normal, semifinite tracial weight $\tau$. In this section, we need the following notation introduced in (\ref{compact-ideal-R-tau}). Let
\begin{equation*}
\begin{aligned}
  \mathcal P\mathcal F(\mathcal R,\tau) &=\{ P  \ : \ P=P^*=P^2\in \mathcal R \text { and } \tau(P)<\infty\},\\
  \mathcal F(\mathcal R,\tau) &= \{XPY \ : \ P\in  \mathcal P\mathcal F(\mathcal R,\tau)  \text { and } X,Y\in\mathcal R\},\\
  \mathcal K(\mathcal R,\tau)&= \text{$\|\cdot\|$-norm closure of }  \mathcal F(\mathcal R,\tau) \text{ in } \mathcal R,
\end{aligned}
\end{equation*}%
be the sets of finite rank projections, finite rank operators, and compact operators respectively, in $(\mathcal{R},\tau )$. For an operator $T\in\mathcal{R}$, denote by $R(T)$ the range projection onto the closure of the range of $T$.

In the rest of this section, we assume that $\mathcal{R}$ is a countably decomposable,  properly
infinite, semifinite von Neumann factor with a faithful, normal, semifinite tracial weight $\tau$. The following two lemmas are useful in the sequel.

\begin{lemma}\label{Tool-0}
For an operator $A$ in $\mathcal{R}$, the following are equivalent:
\begin{enumerate}
\item $A$ is in $\mathcal{K}(\mathcal{R},\tau)$;
\item $|A|$ is in $\mathcal{K}(\mathcal{R},\tau)$;
\item for every $\epsilon>0$, $\tau(\chi_{\left[0,\epsilon\right)}(|A|))=\infty$ and $\tau(\chi_{\left[\epsilon,\infty\right)}(|A|))<\infty$;
\item for every $\epsilon>0$, $\tau(\chi_{\left[0,\epsilon\right]}(|A|))=\infty$ and $\tau(\chi_{\left(\epsilon,\infty\right)}(|A|))<\infty$.
\end{enumerate}
\end{lemma}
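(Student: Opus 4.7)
The plan is to run the implications around the cycle $(1)\Leftrightarrow(2)\Leftrightarrow(3)\Leftrightarrow(4)$, exploiting the Borel functional calculus of $|A|$ throughout. For $(1)\Leftrightarrow(2)$, I would use the polar decomposition $A=V|A|$ with $V\in\mathcal R$ a partial isometry, together with the fact that $\mathcal K(\mathcal R,\tau)$ is a norm-closed two-sided ideal of $\mathcal R$ (immediate from the defining formulas for $\mathcal F(\mathcal R,\tau)$ and its closure), so that $|A|=V^*A$ lies in $\mathcal K(\mathcal R,\tau)$ if and only if $A$ does. The equivalence $(3)\Leftrightarrow(4)$ is a bookkeeping exercise on the point spectral projection $\chi_{\{\epsilon\}}(|A|)$: to deduce (3) from (4), bound $\chi_{\{\epsilon\}}(|A|)\leq \chi_{(\epsilon/2,\infty)}(|A|)$ to see $\tau(\chi_{\{\epsilon\}}(|A|))<\infty$ and hence $\tau(\chi_{[\epsilon,\infty)}(|A|))<\infty$, while $\chi_{[0,\epsilon)}(|A|)\geq \chi_{[0,\epsilon/2]}(|A|)$ yields the infinite-trace half; the reverse direction runs symmetrically.

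For $(3)\Rightarrow(2)$, set $Q_\epsilon=\chi_{[\epsilon,\infty)}(|A|)$. Since $\tau(Q_\epsilon)<\infty$ by hypothesis, $Q_\epsilon\in\mathcal P\mathcal F(\mathcal R,\tau)$, so $|A|Q_\epsilon\in\mathcal F(\mathcal R,\tau)$, and the Borel functional calculus gives $\||A|-|A|Q_\epsilon\|=\||A|\chi_{[0,\epsilon)}(|A|)\|\leq \epsilon$. Letting $\epsilon\to 0^+$ exhibits $|A|$ as a norm limit of elements of $\mathcal F(\mathcal R,\tau)$, whence $|A|\in\mathcal K(\mathcal R,\tau)$.

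The substantive direction is $(2)\Rightarrow(3)$. My first preliminary is that every $F=XPY\in\mathcal F(\mathcal R,\tau)$ satisfies $\tau(R(F))<\infty$: indeed $R(F)\sim R(F^*)$ in the Murray--von Neumann sense, while $R(F^*)\leq R(Y^*P)\sim R(PY)\leq P$, so the tracial property of $\tau$ yields $\tau(R(F))\leq \tau(P)<\infty$. Given $|A|\in\mathcal K(\mathcal R,\tau)$ and $\epsilon>0$, I would pick $F\in\mathcal F(\mathcal R,\tau)$ with $\||A|-F\|<\epsilon/2$. For any unit vector $x\in Q_\epsilon\mathcal H$, $\|Fx\|\geq \||A|x\|-\epsilon/2\geq \epsilon/2$, so $|FQ_\epsilon|^2=Q_\epsilon F^*FQ_\epsilon\geq (\epsilon/2)^2 Q_\epsilon$. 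The polar decomposition $FQ_\epsilon=W|FQ_\epsilon|$ then supplies a partial isometry $W\in\mathcal R$ with $W^*W=Q_\epsilon$ and $WW^*\leq R(F)$, giving $\tau(Q_\epsilon)=\tau(WW^*)\leq \tau(R(F))<\infty$. For the infinite-trace part of (3), $\chi_{[0,\epsilon)}(|A|)=I-Q_\epsilon$, and since $\mathcal R$ is properly infinite with $\tau$ a faithful normal semifinite tracial weight, $\tau(I)=\infty$, forcing $\tau(\chi_{[0,\epsilon)}(|A|))=\infty$.

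The main obstacle lies in $(2)\Rightarrow(3)$: it requires both the finite-trace statement for range projections of elements of $\mathcal F(\mathcal R,\tau)$, which rests on Murray--von Neumann equivalence, and the quantitative bounded-below plus polar-decomposition step that transfers this finiteness onto the spectral projection $Q_\epsilon$. Every other step is either routine Borel functional calculus or an immediate consequence of the ideal property of $\mathcal K(\mathcal R,\tau)$.
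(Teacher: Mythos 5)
Your proof is correct, and in the one substantive direction it takes a genuinely different route from the paper. The shared parts: both you and the paper get $(1)\Leftrightarrow(2)$ from the polar decomposition $A=V|A|$ plus the ideal property of $\mathcal{K}(\mathcal{R},\tau)$, and both get $(3)\Rightarrow(2)$ by truncating to $|A|\chi_{[\epsilon,\infty)}(|A|)$. (For $(3)\Leftrightarrow(4)$ you supply the small bookkeeping argument with $\chi_{\{\epsilon\}}(|A|)$; the paper merely asserts the equivalence.) The divergence is in $(2)\Rightarrow(3)$. The paper argues through the quotient $\mathcal{R}/\mathcal{K}(\mathcal{R},\tau)$: if $\tau(\chi_{[0,\epsilon)}(|A|))<\infty$ it exhibits $\pi(|A|)$ as the image of an invertible element, contradicting $\pi(|A|)=0$; and for the finite-trace half it first shows $\chi_{[\epsilon,\infty)}(|A|)\in\mathcal{K}(\mathcal{R},\tau)$, then approximates that projection in norm by positive finite-rank elements $A_n$ and invokes the fact that projections at norm distance less than $1$ are unitarily equivalent. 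You instead work directly with approximants from $\mathcal{F}(\mathcal{R},\tau)$: you first establish that every $F=XPY$ has $\tau(R(F))<\infty$ via $R(F)\sim R(F^*)\le R(Y^*P)\sim R(PY)\le P$, then transfer this finiteness to $Q_\epsilon=\chi_{[\epsilon,\infty)}(|A|)$ by the bounded-below estimate $Q_\epsilon F^*FQ_\epsilon\ge(\epsilon/2)^2Q_\epsilon$ and the polar decomposition of $FQ_\epsilon$, and you dispose of the infinite-trace half by noting $\tau(I)=\infty$ for a properly infinite $\mathcal{R}$ with faithful trace. Your version buys a more elementary, self-contained argument that never leaves $\mathcal{R}$ (no quotient C$^*$-algebra, no projection-perturbation lemma, no construction of the approximants $A_n$); the paper's version buys brevity by leaning on the quotient map and standard facts about nearby projections. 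Both are sound.
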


\begin{proof}
For an operator $A$ in $\mathcal{R}$, Let $A=V|A|$ be the polar decomposition of $A$. If $A$ is in $\mathcal{K}(\mathcal{R},\tau)$, then so is $|A|=V^{*}A$. On the other hand, if $|A|$ is in $\mathcal{K}(\mathcal{R},\tau)$, then so is $A=V|A|$. That $(2)\Leftrightarrow(3)$ is equivalent to $(2)\Leftrightarrow(4)$. Thus, we only need to prove $(2)\Leftrightarrow(3)$. Suppose that $|A|$ belongs to $\mathcal{K}(\mathcal{R},\tau)$ and $\pi$ is the canonical $*$-homomorphism of $\mathcal{R}$ onto $\mathcal{R}/\mathcal{K}(\mathcal{R},\tau)$. If $\tau(\chi_{\left[0,\epsilon\right)}(|A|))<\infty$, then $\pi(\chi_{\left[0,\epsilon\right)}(|A|))=\pi(\chi_{\left[0,\epsilon\right)}(|A|)|A|)=0$. It follows that
\[
\pi(|A|)=\pi(\chi_{\left[0,\epsilon\right)}(|A|)+\chi_{\left[\epsilon,\infty\right)}(|A|)|A|).
\]
Note that $\chi_{\left[0,\epsilon\right)}(|A|)+\chi_{\left[\epsilon,\infty\right)}(|A|)|A|$ is invertible in $\mathcal{R}$, so $\pi(|A|)$ is invertible in $\mathcal{R}/\mathcal{K}(\mathcal{R},\tau)$. This is a contradiction. By a similar method, if $|A|$ belongs to $\mathcal{K}(\mathcal{R},\tau)$, then
\[
\chi_{\left[\epsilon,\infty\right)}(|A|)=\chi_{\left[\epsilon,\infty\right)}(|A|)|A||A|^{-1}\chi_{\left[\epsilon,\infty\right)}(|A|)\in\mathcal{K}(\mathcal{R},\tau).
\]
If $\chi_{\left[\epsilon,\infty\right)}(|A|)\in\mathcal{K}(\mathcal{R},\tau)$, then, for every $n\in\mathbb{N}$, there exists a positive operator $A_{n}$ such that
\begin{enumerate}
\item[(i)] $\tau(R(A_{n}))<\infty$;
\item[(ii)] $0\leq A_{n}\leq\chi_{\left[\epsilon,\infty\right)}(|A|)$;
\item[(iii)] $\Vert\chi_{\left[\epsilon,\infty\right)}(|A|)-A_{n}\Vert<1/n$.
\end{enumerate}
It is easy to obtain that $0\leq A_{n}\leq R(A_{n})\leq\chi_{\left[\epsilon,\infty\right)}(|A|)$. Thus
\[
\Vert\chi_{\left[\epsilon,\infty\right)}(|A|)-R(A_{n})\Vert<1/n.
\]
A routine calculation shows that $\chi_{\left[\epsilon,\infty\right)}(|A|)$ is unitarily equivalent to $R(A_{n})$. Therefore, we obtain that $\tau(\chi_{\left[\epsilon,\infty\right)}(|A|))<\infty$ holds for every $\epsilon>0$. By the definition of $\mathcal{K}(\mathcal{R},\tau)$, we can prove $(3)\Rightarrow(2)$. This completes the proof.
\end{proof}

\begin{lemma}\label{Tool-1}
Let $X$ be a compact metric space. Suppose that $\phi$ and $\psi$ are two unital $\ast$-homomorphisms of $C(X)$ into a countably decomposable,  properly
infinite,  semifinite factor $\mathcal{R}$ with a faithful normal semifinite tracial weight $\tau$ acting on a separable Hilbert space $\mathcal{H}$. If $\phi \sim_{a}\psi \, \left(\mathcal{R}\right)$, then, for $f$ in $C(X)$,
\[
\phi(f)\in\mathcal{K}(\mathcal{R},\tau)\Leftrightarrow\psi(f)\in\mathcal{K}(\mathcal{R},\tau).
\]
\end{lemma}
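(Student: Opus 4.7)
The plan is essentially immediate once one observes that $\mathcal{K}(\mathcal{R},\tau)$ is a norm-closed two-sided ideal of $\mathcal{R}$, and is therefore invariant under unitary conjugation. I would not need Lemma \ref{Tool-0} here; everything follows directly from the definition in (\ref{compact-ideal-R-tau}) and the definition of $\phi\sim_{a}\psi\,(\mathcal{R})$.

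First I would verify the ideal property of $\mathcal{K}(\mathcal{R},\tau)$. Given $XPY\in\mathcal{F}(\mathcal{R},\tau)$ with $P\in\mathcal{PF}(\mathcal{R},\tau)$ and $X,Y\in\mathcal{R}$, and any $Z\in\mathcal{R}$, the products $Z(XPY)=(ZX)PY$ and $(XPY)Z=XP(YZ)$ are again of the form required in (\ref{compact-ideal-R-tau}). Hence $\mathcal{F}(\mathcal{R},\tau)$ is a two-sided ideal, and passing to norm closure promotes this to $\mathcal{K}(\mathcal{R},\tau)$. In particular, for any unitary $U\in\mathcal{R}$ and $A\in\mathcal{K}(\mathcal{R},\tau)$, the conjugate $U^{*}AU$ lies in $\mathcal{K}(\mathcal{R},\tau)$.

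Next I would exploit the hypothesis $\phi\sim_{a}\psi\,(\mathcal{R})$. By symmetry it suffices to prove the forward implication, so assume $\phi(f)\in\mathcal{K}(\mathcal{R},\tau)$. The definition of $\phi\sim_{a}\psi\,(\mathcal{R})$ produces a net $\{U_{\lambda}\}_{\lambda\in\Lambda}$ of unitaries in $\mathcal{R}$ with
\[
\lim_{\lambda}\bigl\|U_{\lambda}^{*}\phi(f)U_{\lambda}-\psi(f)\bigr\|=0.
\]
By the previous step each $U_{\lambda}^{*}\phi(f)U_{\lambda}$ belongs to $\mathcal{K}(\mathcal{R},\tau)$, and since $\mathcal{K}(\mathcal{R},\tau)$ is norm-closed, so does the norm-limit $\psi(f)$. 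The converse implication follows by replacing $\{U_{\lambda}\}$ with $\{U_{\lambda}^{*}\}$, which implements $\psi\sim_{a}\phi\,(\mathcal{R})$.

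There is no real obstacle: the only verification is the two-sided ideal structure of $\mathcal{K}(\mathcal{R},\tau)$, which is routine from the generating-set description. Notably, the argument does not rely on $\mathcal{R}$ being a factor, on the separability of $\mathcal{H}$, or on $f$ being self-adjoint; it is a general fact about unitary conjugation inside a norm-closed two-sided ideal, so the conclusion of the lemma holds under substantially weaker hypotheses than those stated.
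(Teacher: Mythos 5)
Your proof is correct, but it is a genuinely different and more elementary route than the one in the paper. You observe that $\mathcal{K}(\mathcal{R},\tau)$ is a norm-closed subset of $\mathcal{R}$ stable under left and right multiplication by elements of $\mathcal{R}$ (each generator $XPY$ conjugates to $(U^{*}X)P(YU)$), hence invariant under unitary conjugation, and then you simply pass to the norm limit along the net $\{U_{\lambda}\}$; this is airtight, and your closing remark is also right that the argument uses neither the factor hypothesis, nor separability, nor even that $\phi,\psi$ are $\ast$-homomorphisms --- only that $\phi(f)$ and $\psi(f)$ are approximately unitarily equivalent elements of $\mathcal{R}$. (One small imprecision: calling $\mathcal{F}(\mathcal{R},\tau)$ a two-sided \emph{ideal} requires also checking closure under addition, which you do not verify; but your argument never uses additivity, only multiplicative absorption and norm-closedness of $\mathcal{K}(\mathcal{R},\tau)$, so nothing is at stake.) The paper instead takes a much heavier path: it extends $\phi$ and $\psi$ to normal $\ast$-homomorphisms $\hat{\phi},\hat{\psi}$ on bounded Borel functions, invokes the spectral characterization of membership in $\mathcal{K}(\mathcal{R},\tau)$ from Lemma \ref{Tool-0}, and uses Theorem \ref{DH} to conclude that $\tau(\chi_{(\epsilon,\infty)}(\phi(f)))=\tau(\chi_{(\epsilon,\infty)}(\psi(f)))$ for every $\epsilon>0$. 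What that buys is a strictly finer quantitative conclusion --- the traces of all spectral projections away from $0$ agree, not merely simultaneous compactness --- and it introduces the extension $\hat{\phi}$ and the spectral-projection bookkeeping that the proof of Theorem \ref{App-II-infty} reuses immediately afterward. For the lemma as literally stated, your argument is shorter, needs weaker hypotheses, and is the one I would expect a referee to prefer.
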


\begin{proof}
First, we need to extend $\phi$ and $\psi$ to $\hat{\phi}$ and $\hat{\psi}$ as two normal unital $\ast$-homomorphisms of $C(X)^{\#\#}$ into $\mathcal{R}$, respectively. Given any open subset $\Delta$ of $X$, there exists a continuous function $f$ such that
\[
\begin{cases}
0<f(x)\leq 1, & \text{if }x\in\Delta,\\
f(x)=0, & \text{if }x\notin\Delta.
\end{cases}
\]
Thus, the increasing sequence $\{f^{1/n}\}_{n\in\mathbb{N}}$ converges pointwise to $\chi_{\Delta}$. Furthermore, if $\{f^{1/n}\}_{n\in\mathbb{N}}$ are viewed as elements in $C(X)^{\#\#}$, then $f^{1/n}$ converges to $\chi_{\Delta}$ in the weak$^\ast$ topology. Since $\phi(f^{1/n})=\phi(f)^{1/n}$ and $\{\phi(f)^{1/n}\}_{n\in\mathbb{N}}$ is a monotone increasing sequence of positive operators in $\mathcal{R}$ with the upper bound $I_{\mathcal{R}}$. By applying Lemma $5.1.5$ of \cite{Kadison1}, $\phi(f)^{1/n}$ converges to the projection $R(\phi(f))$ in the strong operator topology. Therefore, $\phi$ can be extended to a unital normal $\ast$-homomorphism $\hat{\phi}$ of $\mathcal{B}(X)$, the $\ast$-subalgebra of all the bounded Borel functions on $X$, into $\mathcal{R}$ unambiguously such that $\hat{\phi}(\chi_{\Delta})=R(\phi(f))$. For details, the reader is referred to Theorem $5.2.6$ and Theorem $5.2.8$ of \cite{Kadison1}.

By applying Lemma \ref{Tool-0}, it is sufficient to suppose that $\phi(f)$ is a positive element in $\mathcal{K}(\mathcal{R},\tau)$. Thus, for every $\epsilon>0$, we have $\tau(\chi_{\left(\epsilon,\infty\right)}(\phi(f)))<\infty$. Note that there exists a continuous function $h$ defined as
\[
h(x)=
\begin{cases}
\dfrac{x-\epsilon}{x},&\epsilon< x,\\
0,&0\leq x\leq\epsilon
\end{cases}
\]
such that
\[
\chi_{\left(\epsilon,\infty\right)}(\phi(f))=\chi_{\left(\epsilon,\infty\right)}(\hat{\phi}(f))=\hat{\phi}(\chi_{\left(\epsilon,\infty\right)}(f))
=\hat{\phi}(\chi_{\left(0,\infty\right)}(h\circ f))=R({\phi}(h\circ f)).
\]
The same equality holds for $\psi$ and $\hat{\psi}$. By applying Theorem \ref{DH}, the relation $\phi \sim_{a}\psi \, \left(\mathcal{R}\right)$ yields the following equality
\[
\tau(\chi_{\left(\epsilon,\infty\right)}(\phi(f)))=\tau(R({\phi}(h\circ f)))=\tau(R({\psi}(h\circ f)))=\tau(\chi_{\left(\epsilon,\infty\right)}(\psi(f))).
\]
A similar argument ensures that
\[
\tau(\chi_{\left[0,\epsilon\right)}(\phi(f)))=\tau(\chi_{\left[0,\epsilon\right)}(\psi(f))).
\]
Therefore, $\phi(f)$ in $\mathcal{K}(\mathcal{R},\tau)$ implies $\psi(f)$ in $\mathcal{K}(\mathcal{R},\tau)$, and vice versa.
\end{proof}

Suppose that $\phi$ and $\psi$ as assumed are two unital $\ast$-homomorphisms of $C(X)$ into $\mathcal{R}$. Then, by Definition \ref{AEC}, the relation $\phi\sim_{C(X)} \psi, \mod \mathcal{K}_{}(\mathcal{R},\tau)$ implies that $\phi \sim_{a}\psi \, \left(\mathcal{R}\right)$. In the rest of this section, we aim to prove the inverse of this.




\begin{theorem}\label{App-II-infty}
Let $X$ be a compact metric space. Suppose that $\phi$ and $\psi$ are two unital $\ast$-homomorphisms of $C(X)$ into a countably decomposable,  properly
infinite,  semifinite factor $\mathcal{R}$ with a faithful normal semifinite tracial weight $\tau$ acting on a separable Hilbert space $\mathcal{H}$. Then the following are equivalent:
\begin{enumerate}
\item $\phi \sim_{a}\psi \, \left(\mathcal{R}\right)$,
\item $\phi\sim_{C(X)} \psi, \mod \mathcal{K}_{}(\mathcal{R},\tau)$.
\end{enumerate}
\end{theorem}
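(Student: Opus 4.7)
The direction $(2) \Rightarrow (1)$ is essentially immediate. In Definition~\ref{AEC} with $M=N=1$ and $\phi, \psi$ unital, the required partial isometry $V$ satisfies $V^*V = VV^* = I_\mathcal{R} \otimes E_{1,1}$, so through the corner isomorphism $(I_\mathcal{R} \otimes E_{1,1})(\mathcal{R} \otimes \mathcal{B}(\mathcal{H}))(I_\mathcal{R} \otimes E_{1,1}) \cong \mathcal{R}$, $V$ corresponds to a unitary $U \in \mathcal{R}$, and condition~(iii) of Definition~\ref{AEC} reads $\|U^*\phi(f)U - \psi(f)\| < \epsilon$ for $f \in \mathcal{F}$. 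Ranging over all $(\mathcal{F},\epsilon)$ gives $\phi \sim_a \psi$ in $\mathcal{R}$.

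The substantive direction is $(1) \Rightarrow (2)$. Fix a finite $\mathcal{F} \subseteq C(X)$ and $\epsilon > 0$; the plan is to produce a unitary $U \in \mathcal{R}$ satisfying $\|U^* \phi(f) U - \psi(f)\| < \epsilon$ for $f \in \mathcal{F}$ and $U^* \phi(f) U - \psi(f) \in \mathcal{K}(\mathcal{R}, \tau)$ for all $f \in C(X)$. Following the first paragraph of the proof of Lemma~\ref{Tool-1}, I first extend $\phi$ and $\psi$ to normal unital $*$-homomorphisms $\hat\phi, \hat\psi : \mathcal{B}(X) \to \mathcal{R}$ on the bounded Borel functions. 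Combining Theorem~\ref{DH} with the spectral-approximation device of Lemma~\ref{Tool-1} (approximating characteristic functions of Borel sets via continuous functions of the form $h(x) = (x-\epsilon')/x$), I derive the key equality $\tau(\hat\phi(\chi_E)) = \tau(\hat\psi(\chi_E))$ for every Borel $E \subseteq X$. Since $\mathcal{R}$ is a factor, this yields the Murray--von Neumann equivalence $\hat\phi(\chi_E) \sim \hat\psi(\chi_E)$ for every Borel $E$, providing the basic stock of partial isometries in $\mathcal{R}$ that will assemble $U$.

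Next I invoke a \emph{quasicentral approximate unit}: using the countable decomposability of $\mathcal{R}$, the semifiniteness of $\tau$, and the separability of $C(X)$, construct an increasing sequence $(R_n) \subseteq \mathcal P\mathcal F(\mathcal{R}, \tau)$ with $R_n \nearrow I_\mathcal{R}$ strongly and $\|[R_n, \phi(f_k)]\|, \|[R_n, \psi(f_k)]\| \to 0$ for a countable dense set $\{f_k\} \subseteq C(X)$. On each finite-trace corner $(R_{n+1}-R_n)\mathcal{R}(R_{n+1}-R_n)$, which is a finite von Neumann algebra, apply Theorem~\ref{Thm-2.2} together with the projection-matching from the previous step to construct, inductively, a partial isometry that matches the restrictions of $\phi$ and $\psi$ to that corner with controlled norm error. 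Taking the strong sum of these partial isometries produces the desired unitary $U$. The finite-trace structure of each corner ensures each block contribution to $U^*\phi(f)U - \psi(f)$ lies in $\mathcal{K}(\mathcal{R},\tau)$ for \emph{every} $f \in C(X)$, while the quasicentral property of $(R_n)$ gives the global $\epsilon$-norm bound for $f \in \mathcal{F}$.

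\textbf{Main obstacle.} The principal technical difficulty is reconciling two competing requirements: the block-by-block rank matching (which gives the compactness condition for \emph{all} $f \in C(X)$, not only $f \in \mathcal{F}$) must be performed \emph{consistently with} the approximate commutation of $R_n$ with $\phi$ and $\psi$ (which delivers the norm estimate on $\mathcal{F}$). Complicating matters, the compressions $(R_{n+1}-R_n)\phi(\cdot)(R_{n+1}-R_n)$ are not genuine $*$-homomorphisms but only approximately so, and the approximation error must be bookkept so that the strong limit $U$ is actually a unitary (rather than merely an isometry) and both conditions survive the limit. The trace equality $\tau(\hat\phi(\chi_E)) = \tau(\hat\psi(\chi_E))$ from the second paragraph is the essential glue that makes the corner-wise matchings compatible across the induction.
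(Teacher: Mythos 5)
Your reduction of $(2)\Rightarrow(1)$ is fine, but the outline for $(1)\Rightarrow(2)$ has two genuine gaps. The first is the ``key equality'' $\tau(\hat\phi(\chi_E))=\tau(\hat\psi(\chi_E))$ for \emph{every} Borel $E\subseteq X$: this is false. Approximate equivalence, via Theorem \ref{DH} and the device of Lemma \ref{Tool-1}, only yields equality of traces for range projections of continuous functions, i.e.\ for projections $\hat\phi(\chi_U)$ with $U$ open (and for spectral sets of the form $\chi_{(\epsilon,\infty)}(\phi(f))$); it does not propagate to arbitrary Borel sets because the two ``scalar spectral measures'' need not be mutually absolutely continuous. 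Already in $\mathcal{R}=\mathcal{B}(\mathcal{H})$ with the usual trace, multiplication by $x$ on $L^{2}([0,1],dx)$ and on $L^{2}([0,1],\mu)$ for a fully supported nonatomic singular measure $\mu$ are approximately equivalent, yet for a Borel set $E$ of Lebesgue measure zero carrying $\mu$ one has $\hat\phi(\chi_E)=0$ while $\tau(\hat\psi(\chi_E))=\infty$. So the ``basic stock of partial isometries'' you want to assemble $U$ from does not exist. The paper is careful to use trace equality only for range projections of continuous functions and for spectral intervals of the single function $h=\mathrm{dist}(\cdot,F)$, whose cut points $\alpha_k$ are \emph{chosen} so that the level sets carry no trace (equation (\ref{equ3.5})).

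The second gap is the one you name as the ``main obstacle'' but do not close, and it is where the actual content of the theorem lives. The compressions $(R_{n+1}-R_n)\phi(\cdot)(R_{n+1}-R_n)$ are not $*$-homomorphisms, so Theorem \ref{Thm-2.2} does not apply to them; and producing an increasing sequence of finite-trace \emph{projections} $R_n\nearrow I$ with $\|[R_n,\phi(f)]\|\to0$ is not supplied by any cited result (Arveson-type quasicentral approximate units are positive contractions, not projections, and upgrading them while keeping commutator estimates is essentially the difficulty of the theorem itself). The paper circumvents both problems by exploiting commutativity of $C(X)$ in two ways your outline lacks. First, it splits $X$ along the closed set $F$ dual to the ideal $\mathcal{I}=\{f:\phi(f)\in\mathcal{K}(\mathcal{R},\tau)\}$ and disposes of the parts over $F$ by the absorption theorem (Theorem 5.3.1 of \cite{Li}), absorbing $\rho_e$ into $\mathrm{id}$ and $\rho'_e$ into $\mathrm{id}$ as in (\ref{equ3.1})--(\ref{equ3.4}); this step cannot be replaced by block-matching, since the two restrictions over $F$ need not be equivalent, only mutually absorbable. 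Second, on $X\setminus F$ it cuts by the spectral projections $\hat\phi(\chi_{(\alpha_{k+1},\alpha_k]}(h))$, which lie in the range of $\hat\phi$ and hence genuinely reduce $\phi$, so the corner maps $\phi_k,\psi_k$ \emph{are} unital $*$-homomorphisms into finite-trace corners with exactly matching ranks, and Theorem \ref{DH} applies corner by corner with summable errors. Without these two ideas the proposed induction cannot be carried out.
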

\begin{proof}
Assume that $\phi$ and $\psi$ are approximately unitarily equivalent relative to $\mathcal{R}$. By applying Theorem \ref{DH}, for every $f$ in $C(X)$, the equality
\[
\mathcal{R}\text{-}\mbox{rank}(\phi(f))=\mathcal{R}\text{-}\mbox{rank}(\psi(f))
\]
holds and yields that $\tau(R(\phi(f)))=\tau(R(\psi(f)))$. Thus, the equality $\ker\phi=\ker\psi$ holds. This ensures that $\psi\circ\phi^{-1}$ is a well-defined unital $\ast$-isomorphism of $\phi(C(X))$ onto $\psi(C(X))$ and we denote this isomorphism by $\rho$. That is, for every $A$ in $\phi(C(X))$ and every $f$ in $C(X)$, 
\[
\rho(A)=\psi\circ\phi^{-1}(A),\quad\rho(\phi(f))=\psi(f).
\]
Therefore, the following two statements are equivalent
\begin{enumerate}
	\item $\phi\sim_{C(X)} \psi, \mod \mathcal{K}_{}(\mathcal{R},\tau)$;
	\item $\rm{id}\sim_{\phi(C(X))} \rho, \mod \mathcal{K}_{}(\mathcal{R},\tau)$, where $\rm{id}$ stands for the identity mapping.
\end{enumerate}

In the following, we need to partition $X$ into two parts in order to reduce the proof into two special cases. Then we assemble them to complete the proof.

By a routine computation, it is easy to verify that the set
\[
\mathcal{I}=\{f\in C(X):\phi(f)\in\mathcal{K}(\mathcal{R},\tau)\}
\]
is a closed ideal in $C(X)$.
Note that, by Lemma \ref{Tool-1}, the equality
\[
\phi(C(X))\cap\mathcal{K}(\mathcal{R},\tau)=\psi(C(X))\cap\mathcal{K}(\mathcal{R},\tau)
\]
holds. This implies that the equality $\mathcal{I}=\{f\in C(X):\psi(f)\in\mathcal{K}(\mathcal{R},\tau)\}$ also holds.

By applying Theorem $3.4.1$ of \cite{Kadison1}, there exists a closed subset $F$ of the compact metric space $X$ such that
\[
\mathcal{I}=\{f\in C(X):f(x)=0, \forall x\in F\}.
\]
As shown in Lemma \ref{Tool-1}, we denote by $\hat{\phi}$ and $\hat{\psi}$ the normal extensions of $\mathcal{B}(X)$ into $\mathcal{R}$ induced by $\phi$ and $\psi$, respectively.
Note that, for every $f$ in $C(X)$, the projections $\hat{\phi}(\chi_{F})$ and $\hat{\psi}(\chi_{F})$ reduce $\phi(f)$ and $\psi(f)$, respectively.

To deal with one of the two special cases mentioned above, we adopt the classical method initiated by Voiculescu. That is, for every $A\in{\phi}(C(X))$ and $B\in{\psi}(C(X))$, we can define representations $\rho_{e}$ and $\rho^{\prime}_{e}$ as follows
\[
\rho_{e}(A)\triangleq \psi\circ\phi^{-1}(A)|_{\text{ran}\ \hat{\psi}(\chi_{F})},\quad \rho^{\prime}_{e}(B)\triangleq \phi\circ\psi^{-1}(B)|_{\text{ran}\ \hat{\phi}(\chi_{F})}.
\]
Note that
\[
\rho_{e}({\phi}(C(X))\cap\mathcal{K}(\mathcal{R},\tau))=\rho^{\prime}_{e}({\psi}(C(X))\cap\mathcal{K}(\mathcal{R},\tau))=0.
\]
By applying Theorem $5.3.1$ of \cite{Li}, we have
\[
{\rm{id}}_{{\phi}(C(X))}\sim_{\phi(C(X))} {\rm{id}}_{{\phi}(C(X))}\oplus\rho_{e},\quad \mod \mathcal{K}_{}(\mathcal{R},\tau),
\]

\[
{\rm{id}}_{{\psi}(C(X))}\sim_{\psi(C(X))} {\rm{id}}_{{\psi}(C(X))}\oplus\rho^{\prime}_{e},\quad \mod \mathcal{K}_{}(\mathcal{R},\tau).
\]

Therefore, for every $f\in C(X)$, it follows that
\begin{equation}
\phi(f)\sim_{C(X)} \phi(f)\oplus(\psi(f)|_{\text{ran}\ \hat{\psi}(\chi_{F})}), \mod \mathcal{K}_{}(\mathcal{R},\tau)\label{equ3.1}
\end{equation}
and
\begin{equation}
\psi(f)\sim_{C(X)} \psi(f)\oplus(\phi(f)|_{\text{ran}\ \hat{\phi}(\chi_{F})}), \mod \mathcal{K}_{}(\mathcal{R},\tau). \label{equ3.2}
\end{equation}
Note that, for every $f\in C(X)$, the equalities
\begin{equation}
\phi(f)\oplus(\psi(f)|_{\text{ran}\ \hat{\psi}(\chi_{F})})
=(\phi(f)|_{\text{ran}\ \hat{\phi}(\chi_{(X-F)})})\oplus(\phi(f)|_{\text{ran}\ \hat{\phi}(\chi_{F})})\oplus(\psi(f)|_{\text{ran}\ \hat{\psi}(\chi_{F})})\label{equ3.3}
\end{equation}
and
\begin{equation}
\psi(f)\oplus(\phi(f)|_{\text{ran}\ \hat{\phi}(\chi_{F})})
=(\psi(f)|_{\text{ran}\ \hat{\psi}(\chi_{(X-F)})})\oplus(\psi(f)|_{\text{ran}\ \hat{\psi}(\chi_{F})})\oplus(\phi(f)|_{\text{ran}\ \hat{\phi}(\chi_{F})})\label{equ3.4}
\end{equation}
hold. Thus, the above relations from $(3.1)$ to $(3.4)$ imply that, to prove that
\[
\phi\sim_{C(X)} \psi, \mod \mathcal{K}_{}(\mathcal{R},\tau),
\]
it is sufficient to prove that
\[
\phi|_{\text{ran}\ \hat{\phi}(\chi_{(X-F)})}\sim_{C(X)}
\psi|_{\text{ran}\ \hat{\psi}(\chi_{(X-F)})}, \mod \mathcal{K}_{}(\mathcal{R},\tau).
\] And this is the other special case.

For every $f$ in $C(X)$, write
\begin{align*}
\phi_{0}(f)\triangleq\phi(f)|_{\text{ran}\ \hat{\phi}(\chi_{(X-F)})},\\
\psi_{0}(f)\triangleq\psi(f)|_{\text{ran}\ \hat{\psi}(\chi_{(X-F)})}.
\end{align*}

Since $X$ is a compact metric space and $F$ is a closed subset of $X$, we can construct a continuous function $h$ such that
\[
h(x)=\mbox{dist}(x,F),\quad\forall x\in X,
\]
where $\mbox{dist}(x,F)$ is the distance between $x$ and $F$. This construction of $h$ ensures that $\phi(h)$ is bounded and belongs to $\mathcal{K}(\mathcal{R},\tau)$.
By applying Lemma \ref{Tool-0}, it follows that:
\begin{enumerate}
\item for every positive integer $k$, the projection $\hat{\phi}(\chi_{\left(\frac{1}{k},\infty\right)}(h))$ is finite, i.e.,
\[
\tau(\hat{\phi}(\chi_{\left(\frac{1}{k},\infty\right)}(h)))<\infty;
\]
\item for every positive integer $k$,
\[
\hat{\phi}(\chi_{\left(\frac{1}{k},\infty\right)}(h))\le\hat{\phi}(\chi_{\left(\frac{1}{k+1},\infty\right)}(h));
\]
\item as $k$ goes to infinity, the projection $\hat{\phi}(\chi_{\left(\frac{1}{k},\infty\right)}(h))$ converges to $\hat{\phi}(\chi_{(X-F)})$ in the strong operator topology.
\end{enumerate}

For a fixed $\delta>0$, define a closed subset $\Delta$ of $X$ by 
\[
\Delta\triangleq\{x\in X: \mbox{dist}(x,F)=\delta\}.
\]
Then $\hat{\phi}({\chi}_{\Delta})$ is a sub-projection of certain $\hat{\phi}(\chi_{\left(\frac{1}{k},\infty\right)}(h))$. Therefore, there exist at most countably many such $\hat{\phi}({\chi}_{\Delta})$ satisfying $\tau(\hat{\phi}({\chi}_{\Delta}))>0$. This implies that there exists a decreasing sequence $\{\alpha_{k}\}^{\infty}_{k=1}$ in the unit interval converging to $0$ such that
\begin{equation}
\hat{\phi}(\chi_{\left(\alpha_{k+1},\alpha_{k}\right)}({h}))=\hat{\phi}(\chi_{\left(\alpha_{k+1},\alpha_{k}\right]}({h}))=\hat{\phi}(\chi_{\left[\alpha_{k+1},\alpha_{k}\right]}({h})).	\label{equ3.5}
\end{equation}
Write $\alpha_{0}=+\infty$. For every $k$ in $\mathbb{N}$,
\[
\tau(\hat{\phi}(\chi_{\left(\alpha_{k+1},\alpha_{k}\right]}({h})))<\infty.
\]

Note that, for every $k\geq 1$, $\Delta_{k}\triangleq\{x\in X:\alpha_{k}<\mbox{dist}(x,F)<\alpha_{k-1}\}$ is open in $X$. Thus, there exists a positive continuous function $h_{k}$ satisfying
\begin{enumerate}
	\item $0\leq h_{k}\leq 1,\quad \forall k\geq 1$;
	\item $h_{k}(x)>0,\,\,\forall x\in \Delta_{k}$;
	\item $h_{k}(x)=0,\,\,\forall x\in X\backslash\Delta_{k}$;
	\item $R({\phi}({h}_{k}))=\hat{\phi}(\chi_{\left(\alpha_{k},\alpha_{k-1}\right)}({h}))$.
\end{enumerate}
Since $\tau(R(\phi({h}_{k})))=\tau(R(\psi({h}_{k})))<\infty$,
the reduced von Neumann algebras
\[
\mathcal{N}_{k}=\hat{\phi}(\chi_{\left(\alpha_{k},\alpha_{k-1}\right]}({h}))\mathcal{R}\hat{\phi}(\chi_{\left(\alpha_{k},\alpha_{k-1}\right]}({h}))
\]
and
\[ \mathcal{M}_{k}=\hat{\psi}(\chi_{\left(\alpha_{k},\alpha_{k-1}\right]}({h}))\mathcal{R}\hat{\psi}(\chi_{\left(\alpha_{k},\alpha_{k-1}\right]}({h}))
\]
are both type $II_1$ factors.

Furthermore, for every $f$ in $C(X)$ and $k\geq 1$, define two $\ast$-homomorphisms $\phi_{k}$ and $\psi_{k}$ of $C(X)$ into $\mathcal{R}$ by
\[
{\phi_{k}}(f)=\hat{\phi}(\chi_{\left(\alpha_{k},\alpha_{k-1}\right]}({h})f),\quad
{\psi_{k}}(f)=\hat{\psi}(\chi_{\left(\alpha_{k},\alpha_{k-1}\right]}({h})f)
\]
belonging to $\mathcal{N}_{k}$ and $\mathcal{M}_{k}$, respectively.

Note that the equality
\begin{equation*}
\tau(R(\phi({h}_{k}f)))=\tau(R(\psi({h}_{k}f))) 
\end{equation*}
implies
\begin{equation}
	\tau(R(\phi_{k}(f)))=\tau(R(\hat{\phi}(\chi_{\left(\alpha_{k+1},\alpha_{k}\right]}({h})f)))=\tau(R(\hat{\psi}(\chi_{\left(\alpha_{k+1},\alpha_{k}\right]}({h})f)))=\tau(R(\psi_{k}(f))). \label{equ3.6}
\end{equation}
Therefore, by applying (\ref{equ3.6}) and Theorem \ref{DH}, for every $k\geq 1$, the relation
\begin{equation}
	{\phi}_{k}\sim_{C(X)} {\psi}_{k},\quad \mod \mathcal{K}_{}(\mathcal{R},\tau)\label{rel3.7}
\end{equation}
holds.

Since $X$ is a compact matric space, there exists a sequence $\mathcal{B}=\{f_{i}\}_{i\in\mathbb{N}}$ dense in $C(X)$. By applying (\ref{rel3.7}), there exists a sequence $\{V_{mk}\}^{\infty}_{m,k=1}$ of unitary operators from $\mathcal{M}_{k}$ to $\mathcal{N}_{k}$ such that
\[
\Vert V^{*}_{mk}{\phi}_{k}(f_{i})V^{}_{mk}-{\psi}_{k}(f_{i})\Vert<\frac{1}{2^m}\cdot\frac{1}{2^k},\quad 1\leq i\leq m+k.
\]
Define a partial isometry $V_{m}$ by $V_{m}\triangleq\oplus^{\infty}_{k=1}V_{mk}$. Then, it follows that
\begin{enumerate}
\item [(a)] for every $m\geq 1$,
\[
V^{*}_{m}V^{}_{m}=\psi_{0}(1)\quad \text{ and }\quad V^{}_{m}V^{*}_{m}=\phi_{0}(1);
\]
\item [(b)] for every $f$ in $C(X)$ and every $m\geq 1$ the limit
\[
\sum^{\infty}_{k=1}\Vert V^{*}_{mk}{\phi}_{k}(f)V^{}_{mk}-{\psi}_{k}(f)\Vert<\infty
\]
shows that $V^{*}_{m}{\phi}_{0}(f)V^{}_{m}-{\psi}_{0}(f)$ is in $\mathcal{K}_{}(\mathcal{R},\tau)$;
\item [(c)] for every $f$ in $C(X)$, there corresponds a sufficiently large $m$, such that
\[
\Vert V^{\ast}_{m}\phi_{0}(f)V_{m}-\psi_{0}(f)\Vert<\frac{1}{2^m}.
\]
\end{enumerate}
By the definition, the above (a), (b), and (c) lead to that
\[
\phi_{0}\sim_{C(X)}\psi_{0},\mod \mathcal{K}_{}(\mathcal{R},\tau).
\]
Thus, combining the above reductions, we obtain that
\[
\phi_{}\sim_{C(X)}\psi_{},\mod \mathcal{K}_{}(\mathcal{R}_{},\tau_{}).
\]
This completes the proof.
\end{proof}

\vspace{1cm}

\end{document}